\newcommand{\mathc}{}
\renewcommand{\hat}{\widehat}
\newcommand{\diag}{\hbox{\rm diag}}
\newcommand{\trace}{\hbox{\rm trace}}
\newcommand{\ones}{\bm 1}
\newcommand{\pig}{\boldsymbol{\pi}}
\newcommand{\bd}{\bm{d}}
\newcommand{\be}{\bm{e}}
\newcommand{\bv}{\bm{v}}
\newcommand{\bw}{\bm{w}}
\newcommand{\bu}{\bm{u}}
\newtheorem{proposition}{Proposition}
\newtheorem{corollary}{Corollary}
\newtheorem{Remark}{Remark}
\newtheorem{theorem}{Theorem}
\newtheorem{example}{Example}
\title{Cut-edge centralities in an undirected graph}
\author{Dario A.~Bini\thanks{Dipartimento di Matematica, Universit\`a di Pisa, Italy ({\tt dario.bini@unipi.it}).}\and 
	Steve Kirkland\thanks{Department of Mathematics, University of Manitoba,
		Winnipeg,
		MB, Canada
		({\tt stephen.kirkland@umanitoba.ca}).}
	\and 
	Guy Latouche\thanks{D\'epartment d'Informatique, Universit\'e Libre de Bruxelles, Belgium ({\tt guy.latouche@ulb.be}).} \and
	Beatrice~Meini\thanks{Dipartimento di Matematica, Universit\`a di Pisa, 
		Italy ({\tt beatrice.meini@unipi.it}).}}
\pgfplotsset{compat=1.18} 
\begin{document}
	
	\maketitle
	
	\begin{abstract}
		A centrality measure of the cut-edges of an undirected graph, given in [Altafini et al.~SIMAX 2023] and based on Kemeny's constant, is revisited. A numerically more stable expression is given to compute this measure, and an explicit expression is provided for some classes of graphs, including one-path graphs and trees formed by three or more branches. These results theoretically confirm the good physical behaviour of this centrality measure, experimentally observed in [Altafini et al.~SIMAX 2023]. Numerical tests are reported to check the stability and to confirm the good physical behaviour.
	\end{abstract}

	{\small {\bf Keywords:} Centrality measure, cut-edge, bridge, Kemeny's constant, undirected graphs}

\section{Introduction}
Let $G=(V,E)$ be an undirected graph, where $V$ is the set of vertices and $E\subset V\times V$ is the set of edges.  
In the paper \cite{abcmp}, Altafini et al.~introduced a centrality measure  to estimate the importance of an edge  $e\in E$ based on the variation of Kemeny's constant.
Informally, Kemeny's constant of the graph $G$, denoted by $\kappa(G)$, represents the average travel time in the network represented by the graph~\cite{ck:11}.

Motivated by this property, the importance of the edge $e$ is defined in \cite{abcmp}  as
 the difference $\kappa(\widetilde {\mathc G})-\kappa(G)$ between Kemeny's constant for the graph $\widetilde {\mathc G}$ obtained by removing the edge, and  Kemeny's constant $\kappa(\mathc G)$ of the original graph. 
This idea of analyzing the variation of  Kemeny's constant was previously proposed by Crisostomi et al.~in \cite{ck:11}. To overcome the Braess paradox
\cite{ck:11}, \cite{braess}, where the removal of an edge may lead to a decrease in  Kemeny's constant, the authors of \cite{abcmp} proposed to replace the graph $\widetilde{\mathc G}$ with the graph $\widehat {\mathc G}$ obtained by adding to $\widetilde {\mathc G}$ two loops at the two vertices of the edge that is removed. They  proved that  Kemeny's constant of the graph obtained this way can only increase. Relying on this fact, the centrality of an edge $e$ is defined as the difference $c(e)=\kappa(\widehat G)- \kappa(G),$ which is always nonnegative.
This definition is not valid if $e$ is a cut-edge (bridge) that is, an edge whose removal  disconnects the graph.

To deal with cut-edges, the same authors introduced a ``filtered'' and ``regularized'' version of this measure. Even though not stated explicitly in \cite{abcmp}, as we will prove later on, the centrality measure of the cut-edge $e$ in a graph $\mathc G$ essentially coincides with the difference
%can be defined as and arrived at the following definition of the centrality of a cut-edge $e$
\begin{equation}\label{eq:0}
c(e)=\kappa(\mathc G)-\kappa(\hat{\mathc G}_1)-\kappa(\hat{\mathc G}_2),
\end{equation}
where  $\hat{\mathc G}_1$ and $\hat{\mathc G}_2$ are the two graphs obtained by removing the edge $e$ and adding two loops at  the endpoints of $e$. 
 This expression avoids the introduction of a regularization parameter and overcomes the problem of numerical cancellation intrinsic in the approach of \cite{abcmp}.

It was observed experimentally in \cite{abcmp}, relying on artificial tests and real road networks, that the centrality of cut-edges is related to the cardinality of the sets of vertices of the two connected components obtained after removal of the cut-edge, so that a cut-edge turns out to be more important if the ratio of the cardinalities of the two subgraphs $\hat{\mathc G}_1$ and $\hat{\mathc G}_2$ is closer to 1. 

In this contribution, we continue this analysis and formally prove this latter property for some graphs, including one-path graphs and trees formed by three (or more) branches. In particular, 
relying on \eqref{eq:0}, we provide explicit expressions of the edge centrality measure for these classes of graphs that yield simple algorithms for their numerical computation.

As a first result, we prove that the random walk on the graphs $\hat{\mathc G}_1$ and $\hat{\mathc G}_2$ is nothing  but the random walk on the original graph $\mathc G$ observed on the set of vertices of $\hat{\mathc G}_1$ and $\hat{\mathc G}_2$, respectively. This result provides a physical meaning of the definition of $c(e)$.
Moreover, we prove lower and upper bounds on $c(e)$, namely $\frac1{1-\lambda_1}\le c(e)\le \frac1{1-\lambda_{n-1}}$, where $-1\le\lambda_1\le\cdots\le \lambda_{n-1}<\lambda_n=1$ are the eigenvalues of the stochastic matrix $P$ defining the random walk on $\mathc G$.

Then we consider  one-path graphs, for which that adjacency matrix is an $n\times n$ tridiagonal matrix, and provide an explicit expression for the centrality $c(e)$ of the edge connecting vertex $m$ to vertex $m+1$ for $m=1,\ldots,n-1$. It turns out that $c(e)$ is essentially proportional to $m(n-m)$, where $n$ is the overall number of vertices in the graph. This explains why the central edges where $m$ is close to $n/2$ are more important than the peripheral edges where $m$ is close either to 1 or to $n$,
as experimentally observed in the paper \cite{abcmp}.

We extend this result to more general classes of trees. In particular, we provide an explicit expression of $c(e)$ for trees formed by three branches, and outline how to generalize this result to the case of any number  of branches. To do that, the explicit expression for Kemeny's constant given in \cite{kirk:ela} for graphs with no loops, is extended to the more general case of graphs having a loop at a given vertex.

As a byproduct of this research, we widen the class of graphs for which  Kemeny's constant can be explicitly given, see for instance \cite{breen19,breen22,breen23,breen_arxiv,faught2021,KD}.

The paper is organized in the following way. Section \ref{sec:prel} contains some preliminary results, where we recall the main definitions and properties of graphs, together with the definition and some properties of Kemeny's constant.

Section \ref{sec:tree} concerns tree graphs. Here, we recall the representation of Kemeny's constant of general graphs with no loops given in \cite{kirk:ela}, extend it to the case of graphs with loops, and provide a specific expression for trees containing loops.

In Section \ref{sec:schur}, we briefly survey the definition and properties of the edge centrality measure given in \cite{abcmp} and provide a more convenient formulation that avoids the introduction of a regularization parameter, together with a physical interpretation given in terms of stochastic (Schur) complements. In particular,
we prove that the random walks on the graphs $\hat{\mathc G}_1$ and $\hat{\mathc G}_2$ coincide with the random walk on $\mathc G$ observed in the set of vertices of the graphs  $\hat{\mathc G}_1$ and $\hat{\mathc G}_2$, respectively.
In the same section, we give lower and upper bounds on $c(e)$.

Section \ref{sec:barbell} deals with Kemeny's constant of one-path graphs, i.e., graphs whose adjacency matrix is tridiagonal. This family of graphs describes birth-death stochastic processes. Here, relying on the results of 
% \cite{bini2018kemeny}, 
 \cite{kirk:ela} and \cite{whitt},
we provide an explicit expression for  the centrality of the edges in a one-path graph  that coincides with the finite truncation of an analogous expression valid for infinite domains given in \cite{bini2018kemeny}. The case of more general trees say, formed by three branches, is considered in Section \ref{sec:2paths}, relying on the extension of the results of \cite{kirk:ela} given in Section \ref{sec:tree} and providing the explicit expression of the adjacency matrix and the distance matrix of this class of graphs. The case of trees formed by $m>3$ branches is outlined in Section \ref{sec:cross}.

Section \ref{sec:exp} reports the results of some numerical experiments while 
Section \ref{sec:conc} summarizes the results and contains final comments.

\section{Preliminaries}\label{sec:prel}
Let $\mathc G=(V,E)$ be an \emph{undirected graph} formed by the set of vertices $V=\{1,2,\ldots,n\}$ and by the set of edges $E\subset V\times V$. The graph $\mathc G$ is fully determined by its
\emph{adjacency matrix} $A=(a_{i,j})$ where $a_{i,j}=1$ if $\{i,j\}\in E$, 
%or if $(j,i)\in E$, 
while $a_{i,j}=0$ elsewhere. Since the graph is undirected, the matrix $A$ is symmetric.
We may consider a weighted graph by assigning a weight to each edge so that the adjacency matrix takes values in the  set $\mathbb R^+$ of nonnegative real numbers instead of $\{0,1\}$.

We denote by $\be_i$ the $i$th unit vector of suitable length and by $\ones_n\in\mathbb R^n$  the vector with components equal to 1. If the size is clear from the context, we write $\ones$ instead of $\ones_n$. 

Define $\bm{d}=(d_i)_i=A\ones$. 
If the graph does not have isolated vertices, then $d_i> 0$ for all the values of $i$. 
In this case, the diagonal matrix $D$ whose diagonal entries are $d_1,\ldots,d_n$ is invertible, so that we may define $P=D^{-1}A$. The matrix $P$ has nonnegative entries and satisfies the condition $P\ones=\ones$ so that it is stochastic and defines a Markov chain modeling a random walk in the graph. We refer to $P$ as the \emph{stochastic matrix associated with the graph} $\mathc G$.
The matrix $P$ can be interpreted as the adjacency matrix of a weighted directed graph.
We define the \emph{loop-free directed graph of} $P$, as the weighted directed graph associated with $P$ where the loops are removed, i.e., the graph with adjacency matrix obtained by zeroing the diagonal entries of $P$.

Given a stochastic irreducible matrix $P$, there exists a unique vector $\pig$ such that $\pig^\top P=\pig^\top$, $\pig^\top\ones=1$. Moreover, $\pig$ has strictly positive entries and is the steady state distribution of the Markov chain having $P$ as the transition matrix (see \cite{Seneta:Markov}).

\subsection{Kemeny's constant}\label{sec:kem}
 Kemeny's constant $\kappa(P)$ of the Markov chain associated with an irreducible stochastic matrix $P$ is interpreted in terms of the average expected time to reach a state, randomly chosen according to the probability distribution $\pig$ \cite{kemeny}. It is well known (\cite{kemeny}, \cite{wang2017kemeny})
that
\begin{equation}\label{eq:keminv}
\kappa(P)=\trace((I-P+\ones \bv^\top)^{-1})-1,
\end{equation}
where $\bv$ is any vector such that $\bv^\top\ones=1$. 
The above formula implies that
\begin{equation}\label{eq:kem}
\kappa(P)=\sum_{i=1}^{n-1}\frac1{1-\lambda_i},
\end{equation}
where $\lambda_1,\ldots,\lambda_{n-1},\lambda_n=1$ are the eigenvalues of $P$. We recall that from the Perron-Frobenius theorem \cite{bp:book}, since $P$ is an irreducible stochastic matrix, then $\lambda_n=1$ is the only eigenvalue equal to 1 so that \eqref{eq:kem} is well defined.

Given an undirected connected graph $\mathc G$ with adjacency matrix $A$, we define $\kappa(\mathc G):=\kappa(P)$ where $P=D^{-1}A$ and $D=\diag(d_1,\ldots,d_n)$, $\bm{d}=A\ones$.
The matrix $P$ is similar to the  matrix $\widetilde P=D^{-1/2}A D^{-1/2}$, which is symmetric since $A$ is symmetric. Therefore the eigenvalues of $P$ are real and can be ordered as $-1\le \lambda_1\le \cdots \le \lambda_{n-1}<\lambda_n=1 $.

In the case where $\mathc{G}$ is formed by $m$ disjoint connected components, then $A$ is an $m\times m$ block diagonal matrix, with irreducible diagonal blocks
$A_1,\ldots,A_m$. Consequently,  $P$ is also an $m\times m$ block diagonal matrix with stochastic diagonal blocks $P_1,\ldots,P_m$. Thus, $P$ has exactly $m$ eigenvalues equal to 1 out of $n$ eigenvalues. Therefore,  Kemeny's constant for a disconnected graph is not defined, since in \eqref{eq:kem} we have division by zero in at least one additive term, or in other words the matrix in equation \eqref{eq:keminv} is not invertible.
To deal with this case, by following \cite{abcmp}, we fix a parameter $r\in(0,1)$ and introduce the {\em  regularized Kemeny's constant}
\begin{equation}\label{eq:rks}
\kappa_r(P)=\sum_{i=1}^{n-1}\frac1{1-r\lambda_i}=\trace((I-r(P-\ones \bv^\top))^{-1})-1,
\end{equation}
where $\bv$ is any vector such that $\bv^\top\ones=1$.
Observe that if $\mathc G$ is connected, then $\lim_{r\to 1}\kappa_r(P)=\kappa(P)$, while  if $\mathc G$ is not connected then $\lim_{r\to 1}\kappa(\mathc G)=\infty$. Observe also that if $\mathc G$ is disconnected with $m$ connected components then
\begin{equation}\label{eq:kr}
\kappa_r(P)=\frac{m-1}{1-r}+\sum_{\lambda_j\ne 1}\frac 1{1-r\lambda_j},
\end{equation}
where
\[
 \sum_{\lambda_j\ne 1}\frac 1{1-r\lambda_j}=\sum_{\ell=1}^m\kappa_r(P_\ell).
\]
That is, the regularized Kemeny's constant of a disconnected graph is given by a term that tends to infinity as $r\to 1$ plus a term that tends to the sum of  Kemeny's constants for the connected disjoint subgraphs.

\section{Kemeny's constant of tree graphs with loops}\label{sec:tree}
Recall that a \emph{tree graph} is a graph where for any pair of vertices $i,j$, there is precisely one path  from $i$ to $j$. Given the graph $\mathc G=(V,E)$, we say that the tree graph $\mathc G'=(V',E')$ is
    a \emph{spanning tree of} $\mathc G$ if $V'=V$ and $E'\subset E$. The set $\{\mathc G_i=(V_i,E_i),~~i=1,\ldots,p\}$ is a \emph{spanning forest of} $\mathc G$ if $\mathc G_i$ are tree graphs,  $\cup_i V_i=V$, $V_i\cap V_j=\emptyset$ for $i\ne j$ and $E_i\subset E$.

Suppose that $T$ is a directed tree (i.e. a directed graph formed from a tree by assigning an orientation to each edge). We say that a vertex $v$ of $T$  is a \emph{sink}  if, for each vertex $u \ne v,$ there is a directed path in $T$ from $u$ to $v$. 
For a graph $G$, with adjacency matrix $A,$ we let $F_1$ denote the set of spanning directed trees in the loop-free directed graph of $P=D^{-1}A$,  where $D=\diag(\bd)$ and $\bd=A \ones$, each of which has a sink. We also let $F_2$ denote the set of spanning directed forests in the loop-free directed graph of $G$ such that each forest consists of two directed trees, each of which has a sink. For any spanning directed subgraph $S$ of the loop-free directed graph of $P$, let ${\rm{wt}}(S)$ denote {\emph{weight of}} $S$, i.e. the product of the entries in $P$ that correspond to the arcs in $S$. From Theorem 2.3 of \cite{kirk:ela}, Kemeny's constant for $G$ is given by 
\begin{equation}\label{eq:kcf1f2}
    \kappa(G)= \frac{\sum_{F \in F_2}{\rm{wt}}(F)}{\sum_{T \in F_1}{\rm{wt}}(T)}.
\end{equation}

According to \cite{kirk:ela}, the above formula leads to the following:

\begin{theorem}\label{th:kirk}
Suppose that $\mathc G$ is a connected, undirected graph on $n$ vertices, without loops,
with degree sequence $d_1 , \ldots, d_n$. For each $j, k = 1, \ldots, n$ with $j\ne k$, let $\sigma_{j,k}$ denote the
number of spanning forests consisting of two trees, one of which contains vertex $j$ and
the other of which contains vertex $k$; set $\sigma_{j,j} = 0$, $j = 1, \ldots, n$. Let $\tau$ be the number
of spanning trees in $\mathc G$, $m$ denote the number of edges in $\mathc G$, and let $\Sigma$ be the matrix given by $\Sigma= (\sigma_{ j,k} )$, $ j,k=1,\ldots,n$.  
Then
\[
\kappa(\mathc G ) =\frac{\bd^\top \Sigma \bd}
{4m\tau}.
\]
\end{theorem}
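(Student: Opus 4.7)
The plan is to derive the claimed identity directly from formula \eqref{eq:kcf1f2}, which is already in hand, by evaluating the numerator and denominator via a sink-by-sink decomposition and then recasting the resulting sums in terms of the degree sequence and the undirected counts $\tau$ and $\sigma_{j,k}$.

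First I would record the form of the weights. Since $\mathc G$ is unweighted and has no loops, $p_{ij}=1/d_i$ whenever $\{i,j\}\in E$ and $p_{ii}=0$. Every $T\in F_1$ has exactly one sink $v$, and every other vertex has out-degree one, so ${\rm{wt}}(T)=\prod_{i\ne v}1/d_i=d_v/\prod_i d_i$. Similarly, every $F\in F_2$ with sinks $j,k$ satisfies ${\rm{wt}}(F)=\prod_{i\notin\{j,k\}}1/d_i=d_j d_k/\prod_i d_i$. The key observation is that the weight depends only on the identity of the sinks, not on the particular tree or forest realizing them.

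Next I would compute the denominator. For each fixed $v\in V$, the directed spanning trees of $\mathc G$ with sink $v$ are in bijection with the undirected spanning trees of $\mathc G$: once $v$ is declared to be the root, the edges of a given spanning tree have a unique orientation toward $v$. Consequently there are $\tau$ such directed trees, and summing over $v$ gives
\[
\sum_{T\in F_1}{\rm{wt}}(T)=\sum_{v\in V}\tau\cdot\frac{d_v}{\prod_i d_i}=\frac{2m\tau}{\prod_i d_i},
\]
where I have used $\sum_v d_v=2m$. For the numerator I would apply the same rooting argument separately to each of the two components of a directed 2-forest. For any unordered pair $\{j,k\}$ with $j\ne k$, the directed forests in $F_2$ whose two sinks are $j$ and $k$ correspond bijectively to the undirected spanning 2-forests separating $j$ from $k$, and hence number $\sigma_{j,k}$. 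Summing over unordered pairs, converting to a sum over ordered pairs (dividing by $2$), and exploiting the symmetry of $\Sigma$ together with $\sigma_{j,j}=0$ yields
\[
\sum_{F\in F_2}{\rm{wt}}(F)=\frac{1}{2\prod_i d_i}\sum_{j,k=1}^{n}\sigma_{j,k}\,d_j d_k=\frac{\bd^\top\Sigma\bd}{2\prod_i d_i}.
\]

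The claimed identity then follows immediately by taking the ratio of the two expressions and cancelling $\prod_i d_i$. The only real care-point is the combinatorial bookkeeping for the numerator: verifying the rooting bijection on each of the two trees in a 2-forest, and keeping track of the ordered-versus-unordered factor of $2$ when assembling the quadratic form $\bd^\top\Sigma\bd$. Everything else is a direct consequence of \eqref{eq:kcf1f2} and the fact that the unweighted, loop-free structure of $\mathc G$ makes ${\rm{wt}}$ a function of the sinks alone.
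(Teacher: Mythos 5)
Your derivation is correct and is exactly the route the paper intends: it states that \eqref{eq:kcf1f2} ``leads to'' Theorem~\ref{th:kirk} (citing \cite{kirk:ela} rather than writing out the details), and your sink-by-sink weight computation, the rooting bijection giving $\tau$ directed trees per sink and $\sigma_{j,k}$ directed forests per unordered sink pair, and the factor-of-$2$ bookkeeping for $\bd^\top\Sigma\bd$ mirror precisely the argument the paper itself carries out in its proof of the loop-generalization \eqref{eq:kemw}. No gaps.
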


In particular, for a tree graph, $m=n-1$, $\tau=1$ and the above result can be rephrased in a simpler form that involves the matrix $\Delta=(\delta_{i,j})$ of distances, where $\delta_{i,j}$ is the distance of vertex $i$ from vertex $j$.

\begin{corollary}
  If $\mathc G$ is a tree with $n$ vertices and no loops, then
			\[
  \kappa(\mathc G) =	\frac{\bd^\top \Delta \bd }{4(n-1)}, 
			\]
  where $\Delta=(\delta_{i,j})$ is the distance matrix for $\mathc G$.
\end{corollary}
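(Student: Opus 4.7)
The plan is to specialize Theorem~\ref{th:kirk} to a tree and show that the matrix $\Sigma$ of two-tree spanning-forest counts coincides with the distance matrix $\Delta$. Once that identification is made, the stated formula follows immediately from $m=n-1$ and $\tau=1$.

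First I would substitute $m=n-1$ and $\tau=1$ into the formula of Theorem~\ref{th:kirk}, reducing the desired identity to the claim $\Sigma=\Delta$.

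The key combinatorial step is to count $\sigma_{j,k}$ for $j\ne k$ when $\mathcal{G}$ is a tree. I would argue as follows. Any spanning forest of $\mathcal{G}$ consisting of two trees is obtained by deleting exactly one edge of $\mathcal{G}$: since $\mathcal{G}$ itself is the unique spanning tree, and removing an edge from a tree disconnects it into exactly two components while removing two or more edges leaves at least three components. Conversely, for each edge $e\in E$, the graph $\mathcal{G}\setminus\{e\}$ is a two-tree spanning forest. Thus two-tree spanning forests of $\mathcal{G}$ are in bijection with the edges of $\mathcal{G}$.

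Next, I would observe that the edge $e$ produces a forest separating $j$ from $k$ (one component contains $j$, the other contains $k$) if and only if $e$ lies on the unique $j$--$k$ path in $\mathcal{G}$. Hence $\sigma_{j,k}$ equals the number of edges on that path, which is exactly $\delta_{j,k}$. Together with $\sigma_{j,j}=0=\delta_{j,j}$, this gives $\Sigma=\Delta$, and substitution into Theorem~\ref{th:kirk} yields
\[
\kappa(\mathcal{G})=\frac{\bd^\top\Delta\bd}{4(n-1)}.
\]
No step looks genuinely difficult here; the only thing to be careful about is the clean bijection argument between two-tree spanning forests and the edges of the tree, together with the observation that deleting $e$ separates $j$ from $k$ precisely when $e$ lies on the $j$--$k$ path.
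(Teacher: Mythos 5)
Your proof is correct and follows exactly the route the paper intends: specialize Theorem~\ref{th:kirk} with $m=n-1$, $\tau=1$, and identify $\Sigma$ with $\Delta$ via the bijection between two-tree spanning forests and deleted edges. The paper leaves this identification implicit, and your argument that $\sigma_{j,k}$ counts the edges on the unique $j$--$k$ path, hence equals $\delta_{j,k}$, supplies the missing detail correctly.
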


\begin{Remark}\label{rem:bapat}
According to Lemma 8.7 of \cite{Bapat:graphs}, if $\bd$ and $\Delta$ are, respectively, the degree vector and distance matrix of a tree on $n$ vertices, then $\bd^\top \Delta = 2\ones^\top \Delta-(n-1)\ones^\top.$ It follows readily that $\bd^\top \Delta \bd = 4\ones^\top \Delta \ones -2(n-1)(2n-1).$ We note in passing  that the related quantity $\frac{1}{2}\ones^\top \Delta \ones$ is known as the Wiener index, introduced in \cite{Wienerindex}. 
\end{Remark}

Now we prove the following generalization of Theorem \ref{th:kirk} for tree graphs having a loop. 

\begin{theorem} 
Let $\mathc G$ be a  graph satisfying the hypotheses of Theorem \ref{th:kirk}. 
Now select an index $k=1, \ldots, n,$  let $w>0$, and construct the weighted graph $\mathc G(k,w)$ from $\mathc G$ by adding a loop of weight $w$ at vertex $k$. Then the value of Kemeny's constant for the random walk on $\mathc G(k,w)$ is given by 
\begin{equation}\label{eq:kemw} 
\kappa(\mathc G(k,w))=\frac{\bd^\top \Sigma \bd + 2w\bd^\top \Sigma \be_k}{2 \tau(2m+w)}.	
\end{equation}
\end{theorem}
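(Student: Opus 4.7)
The plan is to apply identity \eqref{eq:kcf1f2} directly to $\mathc G(k,w)$, tracking how the added loop modifies the arc weights in the associated loop-free directed graph. Let $A'=A+w\be_k\be_k^\top$, $D'=D+w\be_k\be_k^\top$, and $P'=(D')^{-1}A'$. Because the loop at $k$ is removed in forming the loop-free directed graph, $P'$ and $P$ give rise to exactly the same set of arcs; only the arcs leaving $k$ are rescaled, by the factor $\alpha:=d_k/(d_k+w)$. Consequently, for any spanning directed subgraph $S$ in which $k$ has out-degree zero (that is, $k$ is a sink of $S$), the $P'$-weight of $S$ equals its $P$-weight, while otherwise it equals $\alpha$ times the $P$-weight.

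Partition $F_1=F_1^\circ\cup F_1^\bullet$ according to whether the unique sink is $k$, and likewise $F_2=F_2^\circ\cup F_2^\bullet$ according to whether $k$ is one of the two sinks. Then \eqref{eq:kcf1f2} applied to $\mathc G(k,w)$ becomes
\[
\kappa(\mathc G(k,w))=\frac{\sum_{F_2^\circ}{\rm wt}(F)+\alpha\sum_{F_2^\bullet}{\rm wt}(F)}{\sum_{F_1^\circ}{\rm wt}(T)+\alpha\sum_{F_1^\bullet}{\rm wt}(T)},
\]
with all weights now computed in $P$. Each undirected spanning tree of $\mathc G$ contributes a summand $\prod_{i\ne v}d_i^{-1}$ for every choice of sink $v$, so $\sum_{F_1^\circ}{\rm wt}=\tau d_k/\prod_i d_i$ and $\sum_{F_1}{\rm wt}=2m\tau/\prod_i d_i$. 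An analogous sum over undirected two-tree spanning forests with choices of sinks recovers $\sum_{F_2}{\rm wt}=\bd^\top\Sigma\bd/(2\prod_i d_i)$ (consistent with Theorem \ref{th:kirk}), and fixing one sink at $k$ while using the definition of $\sigma_{j,k}$ gives $\sum_{F_2^\circ}{\rm wt}=d_k(\bd^\top\Sigma\be_k)/\prod_i d_i$.

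Substituting these four sums and using $1-\alpha=w/(d_k+w)$, the factor $1/\prod_i d_i$ cancels; the denominator collapses to $\tau d_k(2m+w)/[(d_k+w)\prod_i d_i]$ and the numerator to $d_k(\bd^\top\Sigma\bd+2w\bd^\top\Sigma\be_k)/[2(d_k+w)\prod_i d_i]$, which delivers \eqref{eq:kemw}. The main obstacle is the combinatorial bookkeeping in the previous paragraph: one must correctly treat the subgraphs in $F_1,F_2$ as directed objects with distinguished sinks while aligning the resulting sums against the \emph{undirected} quantities $\tau$ and $\sigma_{j,k}$, so as to produce precisely $\bd^\top\Sigma\be_k$ with the right constant factor, rather than a variant inflated or deflated by a factor of two.
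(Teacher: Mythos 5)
Your proof is correct and follows essentially the same route as the paper: both apply \eqref{eq:kcf1f2} to $\mathc G(k,w)$, observe that the spanning directed trees/forests coincide with those of $\mathc G$, and split the sums according to whether $k$ is a sink (your rescaling factor $\alpha=d_k/(d_k+w)$ is just a repackaging of the paper's direct computation of the modified weights). The four sums you identify, and the final cancellation, match the paper's algebra exactly.
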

\begin{proof}
		Let $P$ be the transition matrix for the random walk on $\mathc G(k,w)$. Observe that for any index $i=1, \ldots, n$ such that $i \ne k,$ the nonzero entries in the $i$--th row of $P$ are all equal to $\frac{1}{d_i}$, while the nonzero off-diagonal entries in the $k$--th row of $P$ are all equal to $\frac{1}{d_k+w}.$ 
		
		For each $i=1, \ldots , n, $ let $\mathcal{T}_i$ be the collection of spanning directed trees in $\mathc G(k,w)$ having vertex $i$ as a sink. For  each $i, j=1, \ldots , n, $ let $\mathcal{F}_{ij}$ be the collection of spanning directed forests in $\mathc G(k,w)$ consisting of trees such that  vertex $i$ as a sink in one tree and vertex $j$ as a sink in the other. (Note that $\mathcal{T}_i$ and  $\mathcal{F}_{ij}$ coincide with the corresponding collections in $\mathc G$.)  
		
		From \eqref{eq:kcf1f2} the value of Kemeny's constant is equal to $$\frac{\sum_{i,j=1, \ldots, n} \sum_{F \in  \mathcal{F}_{ij}}   {\rm{wt}}(F)}
		{\sum_{i=1}^n\sum_{T\in \mathcal{T}_i} {\rm{wt}}(T)} .$$
		
		Fix a spanning tree $\mathc S$ of $\mathc G$. For each $i=1, \ldots, n,$ there is precisely one orientation of the edges of $\mathc S$ so that vertex $i$ is a sink. Denote the corresponding directed tree by $\mathc T$. If $i =k,$ then ${\rm{wt}}(\mathc T) = \frac{d_k}{\prod_{j=1}^n d_j}.$ On the other hand if $i \ne k,$ then ${\rm{wt}}(\mathc T) = \frac{d_i}{(d_k+w)\prod_{j \ne k} d_j}.$ Consequently  each spanning tree of $\mathc G$ contributes $\frac{d_k}{\prod_{j=1}^n d_j} + \sum_{i \ne k} \frac{d_i}{(d_k+w)\prod_{j \ne k} d_j}$ to $	{\sum_{i=1}^n\sum_{T\in \mathcal{T}_i} {\rm{wt}}(T)}.$ Observing that 
		\begin{eqnarray*}
	&&	\frac{d_k}{\prod_{j=1}^n d_j} + \sum_{i \ne k} \frac{d_i}{(d_k+w)\prod_{j \ne k} d_j}		
		 = \\
	&&	  \frac{1}{(d_k+w)\prod_{j \ne k} d_j}(\sum_{i \ne k}d_i + (d_k+w) ) = \\
	&&	\frac{1}{(d_k+w)\prod_{j \ne k} d_j}(2m +w),
			\end{eqnarray*}
		it now follows that 
		\[
		{\sum_{i=1}^n\sum_{T\in \mathcal{T}_i} {\rm{wt}}(T)}
		= 	\frac{1}{(d_k+w)\prod_{j \ne k} d_j}(2m +w) \tau. 
		\]
		
		Next, fix distinct indices $p,q$ and a directed forest $F \in  \mathcal{F}_{pq}.$ If $p, q \ne k,$ then  ${\rm{wt}}(F) = \frac{d_pd_q}{(d_k+w)\prod_{j \ne k} d_j}.$ On the other hand if one of $p, q$ is equal to $k,$ say $p=k,$ then  ${\rm{wt}}(F) = \frac{(d_k+w)d_q}{(d_k+w)\prod_{j \ne k} d_j} =\frac{d_kd_q}{(d_k+w)\prod_{j \ne k} d_j} + \frac{wd_q}{(d_k+w)\prod_{j \ne k} d_j} .$ Hence we find that 
		$$\sum_{p,q=1, \ldots, n} \sum_{F \in  \mathcal{F}_{pq}}   {\rm{wt}}(F)
 = \sum_{p, q=1, \ldots, n} \frac{d_kd_q\sigma_{pq}}{(d_k+w)\prod_{j \ne k d_j}}
		+ \sum_{q=1, \ldots, n} \frac{wd_q \sigma_{qk}}{(d_k+w)\prod_{j \ne k} d_j} . $$ 	Since 
		$\sum_{p, q=1, \ldots, n} {d_kd_q\sigma_{pq}} = \frac{1}{2}\bd^\top \Sigma \bd$ and 
	 $\sum_{q=1, \ldots, n} d_q\sigma_{qk} = \bd^\top \Sigma \be_k,$ \eqref{eq:kemw} follows readily. 
\end{proof}

\begin{corollary}\label{cor:2}
In the case that our graph is a tree with $n$ vertices, \eqref{eq:kemw} reduces to 
			\[
			\kappa(\mathc G(k,w))=\frac{\bd^\top \Delta \bd + 2 w\bd^\top \Delta \be_k}{2 (2n-2+w)}, 
			\]
			where $\Delta$ is the distance matrix for $\mathc G$.
	\end{corollary}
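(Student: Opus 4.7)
The plan is to specialize the formula \eqref{eq:kemw} to the tree case by computing each ingredient ($\tau$, $m$, and the matrix $\Sigma$) explicitly.

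First I would handle the easy parameters. Since $\mathcal{G}$ is a tree on $n$ vertices, it has exactly $m = n-1$ edges, and its unique spanning tree is itself, so $\tau = 1$. Substituting these into \eqref{eq:kemw} reduces the denominator to $2(2n-2+w)$, which matches the target denominator, so the remaining task is purely to identify $\Sigma$ with the distance matrix $\Delta$.

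The central step is therefore the combinatorial identity $\sigma_{j,k} = \delta_{j,k}$ for distinct indices $j, k$ in a tree. The argument I have in mind is a bijection: in a tree there is a unique path $j = v_0, v_1, \ldots, v_\ell = k$ of length $\ell = \delta_{j,k}$ from $j$ to $k$. Every spanning two-tree forest of $\mathcal{G}$ that separates $j$ from $k$ is obtained by deleting exactly one edge of this unique path; conversely, deleting any one of these $\ell$ edges disconnects $\mathcal{G}$ into exactly two components, one containing $j$ and the other containing $k$, producing a valid forest in $F_{j,k}$. Deleting any edge off this path would leave $j$ and $k$ in the same component and so would not contribute. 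This gives a bijection between the $\ell$ edges of the path and the forests counted by $\sigma_{j,k}$, proving $\sigma_{j,k} = \delta_{j,k}$. Since $\sigma_{j,j} = 0 = \delta_{j,j}$ by convention, we conclude $\Sigma = \Delta$.

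Substituting $\Sigma = \Delta$, $\tau = 1$, and $m = n-1$ into \eqref{eq:kemw} yields the claimed formula. I do not expect any serious obstacle: the only nontrivial piece is the spanning-forest-to-path-edge bijection described above, and this is standard for trees. If desired, one could alternatively derive $\sigma_{j,k} = \delta_{j,k}$ from the Matrix–Tree theorem applied to the Laplacian of the tree, but the direct bijective argument seems cleaner and more in keeping with the combinatorial flavor of the preceding proof.
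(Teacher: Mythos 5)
Your proposal is correct and matches the paper's (implicit) reasoning: the paper states the corollary without proof, the intended justification being exactly the substitution $\tau=1$, $m=n-1$, and $\Sigma=\Delta$ into \eqref{eq:kemw}. Your bijection argument for $\sigma_{j,k}=\delta_{j,k}$ --- identifying spanning two-tree forests of a tree with single-edge deletions, and those separating $j$ from $k$ with the edges of the unique $j$--$k$ path --- is the standard and correct way to fill in that step.
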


\begin{Remark}\label{rem:alt}
Here we maintain the notation of Corollary \ref{cor:2}. We have already observed in Remark \ref{rem:bapat} that $\bd^\top \Delta \bd$ can be written in terms of $\ones^\top \Delta \ones$; further, since $\bd^\top \Delta \be_k = 2 \ones^\top \Delta\be_k-(n-1),$ we arrive at the following alternate expression for $\kappa(\mathc G(k,w))$, which is sometimes more convenient to work with: 
 $$\kappa(\mathc G(k,w)) = \frac{2\ones^\top \Delta \ones + 2w  \ones^\top \Delta \be_k -(n-1)(2n-1+w)}{2n-2+w}. 
 $$    
\end{Remark}

\section{Centrality of cut-edges}\label{sec:schur}
In \cite{abcmp} a measure of the centrality or importance of an edge $e$ in a connected graph $\mathc G=(V,E)$ is defined in terms of the variation of  Kemeny's constant. 
This definition has a different expression, depending on whether or not $e$ is a cut-edge. 
In the case that $e$ is a cut-edge, the adjacency matrix $A$ of  $(V, E\setminus \{e\})$ is a $2\times 2$ block diagonal matrix, i.e., $A=\hbox{diag}(A_1,A_2)$, so that $(V, E\setminus \{e\})$ is formed by two disjoint connected graphs  $\mathc G_1$ and $\mathc G_2$, having respective  adjacency matrices $A_1$ and $A_2$.

In this section, we briefly survey the definition and  properties of the edge centrality measure given in \cite{abcmp}. 

Let $\widehat{\mathc{G}}$ be the graph obtained by removing the edge $e=(i,j)$ and by adding a loop at vertices $i$ and $j$, with the same weight as the edge $e$. The adjacency matrix of $\widehat{\mathc G}$ is given by 
\begin{equation}\label{eq:rk1}
\widehat A=A+a_{ij}(\be_i-\be_j)(\be_i-\be_j)^\top.
\end{equation}

In \cite{abcmp}, if $e$ is not a cut-edge, the centrality measure of $e$ is defined as $c(e)=\kappa(\hat{\mathc G})-\kappa(\mathc G)$, that is the variation of Kemeny's constant for  the graph resulting from the removal of the edge $e$ and  the addition of the two loops.

Instead, if $e$ is a cut-edge, $\hat{\mathc G}$ is disconnected so that $c(e)=\infty$. 
For this reason, the concept of the regularized centrality measure has been introduced in \cite{abcmp}. According to this definition, given  the regularization parameter $r>0$, the regularized centrality score of $e$ is
\begin{equation}\label{eq:cer}
c_r(e)=\frac1{1-r}-\left( \kappa_r(\hat{\mathc G})-\kappa_r(\mathc G) \right),
\end{equation}
where $\kappa_r(\cdot)$ is the regularized Kemeny's constant defined in \eqref{eq:rks}.

The following proposition gives an expression for the limit of $c_r(e)$ as $r\to 1$:
\begin{proposition}
Let $e=(i,j)\in E$ be a cut edge  and let $\widehat{\mathc G}$ be the graph obtained by removing edge $e$ and by adding two loops at vertices $i$ and $j$ with weights $a_{ij}$. Let $\widehat{\mathc G}_1$ and $\widehat{\mathc G}_2$  be the two connected components of $\widehat{\mathc G}$.
Then
\[
\lim_{r\to 1}c_r(e)=\kappa(\mathc G)-\kappa(\hat{\mathc G}_1)-\kappa(\hat{\mathc G}_2)>0.
\]
\end{proposition}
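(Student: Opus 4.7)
The plan is to first make the $\frac{1}{1-r}$ singularity in the definition of $c_r(e)$ cancel algebraically, and then prove the sign via spectral interlacing. Applying \eqref{eq:kr} to $\hat{\mathc G}$, which is disconnected with $m=2$ components, gives $\kappa_r(\hat{\mathc G})=\frac{1}{1-r}+\kappa_r(\hat{\mathc G}_1)+\kappa_r(\hat{\mathc G}_2)$. Substituting into the definition \eqref{eq:cer}, the two copies of $\frac{1}{1-r}$ cancel and one obtains the clean identity
\[
c_r(e)=\kappa_r(\mathc G)-\kappa_r(\hat{\mathc G}_1)-\kappa_r(\hat{\mathc G}_2).
\]
Since $\mathc G$, $\hat{\mathc G}_1$ and $\hat{\mathc G}_2$ are all connected, the spectral formula \eqref{eq:rks} shows that each $\kappa_r(\cdot)$ on the right is continuous at $r=1$ with limit the ordinary Kemeny constant \eqref{eq:kem}, so passing to the limit gives the stated equality.

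For strict positivity, the key observation is that removing edge $\{i,j\}$ and adding loops of weight $a_{ij}$ at $i$ and $j$ preserves the degree vector $\bd=A\ones=\hat A\ones$, because the loss of $a_{ij}$ from rows $i,j$ of $A$ is exactly restored by the diagonal loop entries in $\hat A$. Hence $A$ and $\hat A$ share the same diagonal matrix $D$, and the symmetric matrices $\widetilde P=D^{-1/2}AD^{-1/2}$ and $\widetilde{\hat P}=D^{-1/2}\hat AD^{-1/2}$, similar to $P$ and $\hat P$ respectively, differ by a rank-one positive semidefinite perturbation derived from \eqref{eq:rk1}:
\[
\widetilde{\hat P}-\widetilde P=a_{ij}\bu\bu^\top,\qquad \bu=d_i^{-1/2}\be_i-d_j^{-1/2}\be_j.
\]
The standard interlacing result for rank-one symmetric perturbations then yields $\lambda_i\le\hat\lambda_i\le\lambda_{i+1}$ for $i=1,\ldots,n-1$, where $\lambda_1\le\cdots\le\lambda_{n-1}<\lambda_n=1$ are the eigenvalues of $P$ and $\hat\lambda_1\le\cdots\le\hat\lambda_n$ those of $\hat P$.

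To conclude, since $\hat{\mathc G}$ has two components one has $\hat\lambda_{n-1}=\hat\lambda_n=1$, so \eqref{eq:kem} applied to each piece gives $\kappa(\hat{\mathc G}_1)+\kappa(\hat{\mathc G}_2)=\sum_{i=1}^{n-2}\frac{1}{1-\hat\lambda_i}$. From $\hat\lambda_i\le\lambda_{i+1}<1$ we get $\frac{1}{1-\hat\lambda_i}\le\frac{1}{1-\lambda_{i+1}}$ for $i\le n-2$; summing and relabelling yields
\[
\kappa(\hat{\mathc G}_1)+\kappa(\hat{\mathc G}_2)\le\sum_{j=2}^{n-1}\frac{1}{1-\lambda_j}=\kappa(\mathc G)-\frac{1}{1-\lambda_1},
\]
and since $\lambda_1\ge-1$ the last term is at least $\frac{1}{2}$, delivering strict positivity. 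The only mild obstacle is spotting the degree preservation, which is what allows the common $D^{-1/2}$ scaling to produce a genuinely rank-one positive semidefinite perturbation; once that is in hand, the Kemeny comparison is essentially a telescoping sum.
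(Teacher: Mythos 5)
Your proof is correct and follows essentially the same route as the paper: the $\frac{1}{1-r}$ singularity cancels via \eqref{eq:kr} applied to the two-component graph $\widehat{\mathc G}$, and continuity of the eigenvalues (equivalently of each term in \eqref{eq:rks}) gives the stated limit. For the strict positivity, which the paper's one-line proof leaves implicit, your rank-one interlacing argument is precisely the one the paper uses later to establish the lower bound $c(e)\ge \frac{1}{1-\lambda_1}\ge\frac12$ in Theorem~\ref{thm:bounds}, so your write-up is simply a more self-contained version of the same argument.
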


\begin{proof}
The proof follows from \eqref{eq:kr}, applied with $m=1$, and from the continuity of eigenvalues.
\end{proof}
Summarizing this analysis, we introduce the definition of the centrality measure of an edge $e$  as
\begin{equation}\label{eq:centr}
    c(e)=\left\{
    \begin{array}{ll}
        \kappa(\hat{\mathc{G}})-\kappa(\mathc{G}) & \text{if } e \text{ is not a cut-edge}\\
          \kappa(\mathc{G})-\kappa(\hat{\mathc G}_1)-\kappa(\hat{\mathc G}_2)  & \text{if } e \text{ is a cut-edge}.
    \end{array}
    \right.
\end{equation}
It is worth pointing out that, with respect to the definition given in \cite{abcmp}, this formulation does not require regularization.

We prove a result that provides a physical interpretation of the graphs $\hat{\mathc G}_1$ and $\hat{\mathc G}_2$ obtained by removing a cut-edge $e$ and adding  loops at its end vertices. This result, expressed in terms of stochastic (Schur) complements,  adds some more information about the properties of  Kemeny's constant of stochastic complements given in \cite{bdkm}. In the following, given an adjacency matrix $A$, we denote by $D_A$ the diagonal matrix whose diagonal entries are the components of the vector $A\ones$.

\begin{theorem}\label{th:schur}
Let $\mathc G$ be a connected graph with adjacency matrix $A=(a_{ij})$, $e=(i,j)$ a cut-edge, and $\hat{\mathc G}_1$, $\hat{\mathc G}_2$ the two connected graphs obtained from $\mathc G$ by removing the edge $e$ and adding two loops at the vertices of $e$ with weight $a_{ij}$. Denote by $\widehat A_1$ and $\widehat A_2$ the adjacency matrices of the graphs  $\hat{\mathc G}_1$, and $\hat{\mathc G}_2$, respectively. Denote by $P=D_A^{-1}A$, $\widehat P_1=D_{\widehat A_1}^{-1}\widehat A_1$ and $\widehat P_2=D_{\widehat A_2}^{-1}\widehat A_2$ the three stochastic matrices associated with a random walk on $\mathc G$, $\hat{\mathc G}_1$, and $\hat{\mathc G}_2$, respectively. Then $\widehat P_1$ and $\widehat P_2$ are the stochastic complements of $P$, that is, $\widehat P_1=P_{11}+P_{12}(I-P_{22})^{-1}P_{21}$,
$\widehat P_2=P_{22}+P_{21}(I-P_{11})^{-1}P_{12}$, where $I$ denotes the identity matrix of suitable size, 
\[
P=\begin{bmatrix}
P_{11}&P_{12}\\ P_{21}&P_{22}
\end{bmatrix}
\]
and the vertices are numbered so that the vertices of $\widehat {\mathc G}_1$ come first.
\end{theorem}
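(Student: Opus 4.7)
The plan is to exploit the fact that a cut-edge gives the off-diagonal blocks of $P$ a rank-one structure, so that the whole theorem collapses to a single scalar identity. Order the vertices so that those of $\hat{\mathc G}_1$ come first and, abusing notation, write $\be_i\in\mathbb R^{n_1}$ and $\be_j\in\mathbb R^{n_2}$ for the unit vectors at the local indices of $i$ and $j$. Because $e$ is the only edge linking the two blocks,
\[
A_{12}=a_{ij}\be_i\be_j^\top,\qquad A_{21}=a_{ij}\be_j\be_i^\top,
\]
so $P_{12}=(a_{ij}/d_i)\be_i\be_j^\top$ and $P_{21}=(a_{ij}/d_j)\be_j\be_i^\top$. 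A short row-sum check shows that removing $e$ and adding a loop of weight $a_{ij}$ at $i$ leaves the $i$-th row sum unchanged, so $\hat d_i=d_i$; hence $D_{\widehat A_1}$ equals the leading principal block $D_1$ of $D_A$, and
\[
\widehat P_1=D_1^{-1}\widehat A_1=P_{11}+\frac{a_{ij}}{d_i}\be_i\be_i^\top.
\]

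Next I would compute the stochastic-complement correction. Substituting the rank-one expressions for $P_{12}$ and $P_{21}$ gives
\[
P_{12}(I-P_{22})^{-1}P_{21}=\frac{a_{ij}^2}{d_i d_j}\,\bigl[(I-P_{22})^{-1}\bigr]_{jj}\,\be_i\be_i^\top,
\]
so the theorem reduces to the scalar identity $[(I-P_{22})^{-1}]_{jj}=d_j/a_{ij}$. This is the one delicate point, and I expect it to be the main (though small) obstacle. I would handle it with a row-sum trick: since the rows of $P$ sum to one, $(I-P_{22})\ones_{n_2}=P_{21}\ones_{n_1}=(a_{ij}/d_j)\be_j$, and inverting yields $\ones_{n_2}=(a_{ij}/d_j)(I-P_{22})^{-1}\be_j$. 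Reading off the $j$-th component gives exactly the required value.

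Combining the two displays produces $P_{11}+P_{12}(I-P_{22})^{-1}P_{21}=\widehat P_1$, and the symmetric argument with the two blocks interchanged gives the formula for $\widehat P_2$. As a conceptual sanity check I would append a remark recalling that the stochastic complement is the transition matrix of $P$ observed only on the first block of states: since the only way out of $\hat{\mathc G}_2$ is through the unique gateway vertex $i$, every excursion into block $2$ returns with probability one to $i$, contributing exactly $a_{ij}/d_i$ to the self-transition at $i$ of the observed chain. This matches the loop of weight $a_{ij}$ at $i$ in $\widehat{\mathc G}_1$, which is precisely the ``physical meaning'' that the theorem is meant to convey.
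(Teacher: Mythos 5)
Your proof is correct and takes essentially the same route as the paper's: both exploit the rank-one structure $P_{12}=p_{ij}\be_i\be_{j-m}^\top$, $P_{21}=p_{ji}\be_{j-m}\be_i^\top$ of the off-diagonal blocks to reduce the stochastic complement to $P_{11}$ plus a scalar multiple of $\be_i\be_i^\top$. The only (minor) difference is how that scalar is pinned down: the paper appeals to the general fact that a stochastic complement is stochastic, whereas you evaluate $[(I-P_{22})^{-1}]_{jj}$ directly from the row-sum identity $(I-P_{22})\ones=P_{21}\ones$, which is a self-contained proof of the same fact in this special case.
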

\begin{proof}
Let $m$ denote  the size of $P_{11}$, i.e., the number of vertices in $\widehat{\mathc G}_1$.
Since $e=(i,j)$ is a cut-edge, then $1\le i\le m$, and $j>m$, moreover all the entries of $P_{12}$ are zero except the entry in position $(i,j-m)$ which is $p_{ij}$, so that we may write $P_{12}=p_{ij}\be_i\be_{j-m}^\top$ and $P_{21}=p_{ji}\be_{j-m}\be_i^\top$. Now consider the stochastic complement $P_1$. We find that
\[
\widehat P_1=P_{11}+p_{ij}p_{ji}\be_i\be_{j-m}^\top(I-P_{22})^{-1}\be_{j-m}\be_i^\top=P_{11}+\gamma \be_i\be_i^\top,
\]
where $\gamma= p_{ij}p_{ji} \be_{j-m}^\top(I-P_{22})^{-1}\be_{j-m}
$.
Since $\widehat P_1$ is stochastic, then $\gamma=1/d_i$ where $d_i$ is the $i$th component of the vector $\bd=A\ones$. Thus, $\widehat P_1$ differs from $P_{11}$ for the entry in position $(i,i)$ that is equal to $1/d_i$. Therefore, the matrix $\widehat P_1$ coincides with the stochastic matrix associated with the graph $\widehat{\mathc G}_1$ where the loop at vertex $i$ is given exactly by the entry $1/d_i$ in position $(i,i)$ (compare with \eqref{eq:rk1}).
We proceed similarly  for $\widehat P_2$.
\end{proof}

According to the above theorem, the random walk on the graph $\hat{\mathc G}_i$ coincides with the random walk on the whole graph $\mathc G$ observed in the vertices of the subgraph $\widehat{\mathc G}_i$, for $i=1,2$. This fact provides a physical interpretation to the definition of $c(e)$ given in \eqref{eq:centr}  for a cut-edge $e$.

A lower and upper bound on $c(e)$  can be obtained by the Cauchy interlacing theorem:

\begin{theorem}\label{thm:bounds}
Let $A$ be the adjacency matrix associated with the 
connected graph $\mathc G$.
Let $P=D_A^{-1}A$ be the associated stochastic matrix. Let $e$ be a cut-edge in $\mathc G$. Then
\[
\frac1{1-\lambda_1}\le c(e)\le \frac1{1-\lambda_{n-1}},
\]
where $-1\le\lambda_1\le\lambda_2\le\cdots\le \lambda_{n-1}<\lambda_n=1$ are the eigenvalues of $P$.
\end{theorem}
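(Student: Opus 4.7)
The approach is to express $\kappa(\mathc G)$, $\kappa(\widehat{\mathc G}_1)$ and $\kappa(\widehat{\mathc G}_2)$ via \eqref{eq:kem} as sums over eigenvalues, and then to compare these sums by viewing the block-diagonal matrix associated with $\widehat{\mathc G}_1\cup\widehat{\mathc G}_2$ as a rank-one positive semidefinite perturbation of $P$ after a suitable symmetrization.

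First, I would pass to symmetric forms. Since $A$ is symmetric, $\widetilde P := D^{-1/2}AD^{-1/2}$ is similar to $P$ and thus shares its spectrum $\lambda_1\le\cdots\le\lambda_n=1$. The key preparatory observation is that removing $e=(i,j)$ and adding loops of weight $a_{ij}$ at $i$ and $j$ preserves every vertex degree, so the block-diagonal adjacency matrix $\widehat A := \diag(\widehat A_1,\widehat A_2)$ satisfies $D_{\widehat A}=D$. Hence $\widetilde Q := D^{-1/2}\widehat A D^{-1/2}$ is symmetric and block diagonal, with blocks similar to $\widehat P_1$ and $\widehat P_2$; its eigenvalues $\tilde\mu_1\le\cdots\le\tilde\mu_n$ form the multiset union of the spectra of $\widehat P_1$ and $\widehat P_2$. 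Since both $\widehat P_1$ and $\widehat P_2$ are irreducible stochastic matrices, exactly two of these eigenvalues equal $1$, so $\tilde\mu_{n-1}=\tilde\mu_n=1$ and $\tilde\mu_k<1$ for $k\le n-2$.

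Second, using the rank-one identity \eqref{eq:rk1}, I would write
\[
\widetilde Q = \widetilde P + a_{ij}\,\bu\bu^\top, \qquad \bu := D^{-1/2}(\be_i-\be_j),
\]
which displays $\widetilde Q$ as a rank-one positive semidefinite perturbation of the symmetric matrix $\widetilde P$. The standard interlacing for such perturbations then yields
\[
\lambda_k \le \tilde\mu_k \le \lambda_{k+1}, \qquad k = 1,\ldots,n-1.
\]

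Finally, applying \eqref{eq:kem} to each of the three graphs and noting that $\{\tilde\mu_k\}_{k=1}^{n-2}$ enumerates all non-unit eigenvalues of $\widehat P_1$ together with those of $\widehat P_2$, I obtain
\[
c(e) = \sum_{k=1}^{n-1}\frac{1}{1-\lambda_k} - \sum_{k=1}^{n-2}\frac{1}{1-\tilde\mu_k}.
\]
Peeling off the $k=n-1$ term of the first sum and pairing the remaining terms gives $c(e)=\frac{1}{1-\lambda_{n-1}}+\sum_{k=1}^{n-2}\bigl(\frac{1}{1-\lambda_k}-\frac{1}{1-\tilde\mu_k}\bigr)$; each summand is nonpositive by the interlacing $\lambda_k\le\tilde\mu_k<1$, which yields the upper bound. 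Symmetrically, peeling off $k=1$ from the first sum and pairing $\frac{1}{1-\lambda_{k+1}}$ with $\frac{1}{1-\tilde\mu_k}$ yields $c(e)=\frac{1}{1-\lambda_1}+\sum_{k=1}^{n-2}\bigl(\frac{1}{1-\lambda_{k+1}}-\frac{1}{1-\tilde\mu_k}\bigr)$ with nonnegative summands by $\tilde\mu_k\le\lambda_{k+1}<1$, yielding the lower bound. The main obstacle is not any single deep step but the careful setup in the first stage: checking that $D_{\widehat A}=D$ so that $\widetilde P$ and $\widetilde Q$ admit a common symmetrization, and pinning down that $1$ is an eigenvalue of $\widetilde Q$ of multiplicity exactly two, since this is precisely what makes the telescoping in the final step line up with the interlacing intervals.
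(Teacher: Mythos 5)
Your proposal is correct and follows essentially the same route as the paper: observe that deleting the cut-edge and adding the two loops preserves all degrees so that $P$ and $\widehat P=\diag(\widehat P_1,\widehat P_2)$ admit a common symmetrization differing by a rank-one positive semidefinite term, apply eigenvalue interlacing, identify the double eigenvalue $1$ of the block-diagonal matrix, and compare the two eigenvalue sums term by term. The only cosmetic difference is that you make the telescoping/pairing step explicit, whereas the paper states the resulting sum inequalities directly.
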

\begin{proof}
        Let $e=(p,q)$ and set $\bv=\be_p-\be_q$. According to \eqref{eq:rk1}, the adjacency matrix $\widehat A$ associated with $\widehat {\mathc G}$ is $\widehat A=A+a_{pq}\bv\bv^\top=\hbox{diag}(\widehat A_1,\widehat A_2)$,
        where $\widehat A_1$ and $\widehat A_2$ are the adjacency matrices associated with $\widehat{\mathc G}_1$ and $\widehat{\mathc G}_2$. Therefore for the vectors
        $\bd=A\ones$ and $\widehat \bd=\widehat A\ones$ we have
        $\bd=\widehat \bd$.  This implies that $D_A=D_{\widehat A}$ and 
        \[
        P=D_A^{-1}A,\quad \widehat P=\hbox{diag}(\widehat P_1,\widehat P_2)=D_A^{-1}(A+ a_{pq}
        \bv\bv^\top).
        \]
        The above equation can be symmetrized as follows
        \[\begin{split}
        &D_A^{\frac12}PD_A^{-\frac12}=D_A^{-\frac12}AD_A^{-\frac12},\\ 
        &D_A^{\frac12}\widehat PD_A^{-\frac12}=
        D_A^{-\frac12}\widehat AD_A^{-\frac12}=
        D_A^{-\frac12}\hbox{diag}(\widehat P_1,\widehat P_2)D_A^{-\frac12}=D_A^{-\frac12}A D_A^{-\frac12}+a_{pq}
        \bw\bw^\top,
        \end{split}
        \]
        for $\bw=D_A^{-\frac12}\bv$.
        Therefore, by applying the Cauchy interlacing theorem \cite{hj:book} to the symmetric matrices
        $D_A^{-\frac12}AD_A^{-\frac12}$ and $D_A^{-\frac12}\widehat AD_A^{-\frac12}$ whose difference is a rank 1 matrix,
        we find that 
        \begin{equation}\label{eq:ineq}
        \lambda_i\le\mu_i\le\lambda_{i+1},
        \end{equation}
        where $\mu_i$ are the eigenvalues of $\widehat P$ sorted in non decreasing order. Moreover, since the set of values $\mu_i$ is the union of the spectra of $\widehat P_1$ and $\widehat P_2$ that are both stochastic, we have $\mu_{n-1}=\mu_n=1$.
        Finally recall that $\kappa(\mathc G)=\sum_{i=1}^{n-1}\frac1{1-\lambda_i}$ and $\kappa(\widehat{\mathc G}_1)+\kappa(\widehat{\mathc G}_2)=\sum_{i=1}^{n-2}\frac1{1-\mu_i}$, so that from \eqref{eq:ineq} we deduce that
        \[
        \sum_{i=1}^{n-2}\frac1{1-\lambda_{i}}\le
        \kappa(\widehat {\mathc G}_1)+\kappa(\widehat {\mathc G_2})\le\sum_{i=1}^{n-2}\frac1{1-\lambda_{i+1}},
        \]
        that is
        \[
        \kappa(\mathc G)-\frac1{1-\lambda_{n-1}}\le
        \kappa(\widehat {\mathc G}_1)+\kappa(\widehat {\mathc G_2})\le\kappa(\mathc G)-\frac1{1-\lambda_{1}},
        \]
        whence
        \[
        \frac1{1-\lambda_1}\le \kappa(\mathc G)-\kappa(\widehat{\mathc G}_1)-\kappa(\widehat{\mathc G}_2)\le \frac1{1-\lambda_{n-1}}
        \]
        which completes the proof since $c(e)=\kappa(\mathc G)-\kappa(\widehat{\mathc G}_1)-\kappa(\widehat{\mathc G}_2)$.
\end{proof}

Our next two examples illustrate the fact that the bounds on $c(e)$ in Theorem~\ref{thm:bounds} can be quite accurate.

\begin{example}\label{ex1}
Consider the star on $n$ vertices, $K_{1,n-1},$ which has a vertex of degree $n-1$ adjacent to $n-1$ vertices of degree $1$, see Figure \ref{fig:ex}. By assuming that the centre of the star is vertex $n$, the transition matrix for the corresponding random walk is $\left[\begin{smallmatrix}
	0 &\ones_{n-1}\\ \frac{1}{n-1}\ones_{n-1}^\top & 0\end{smallmatrix}\right] $, which has eigenvalues $-1, 1, 0,$ the latter of multiplicity $n-2$. Hence $\lambda_{n-1}=0$ for this graph, and, from \eqref{eq:kem}, it is readily determined that $\kappa(K_{1,n-1}) = n-\frac{3}{2}.$ Now let $e$ be one of the edges of $K_{1,n-1}$. Assume, without loss of generality, that $e$ is the edge $\{1,n\}$. The transition matrix that arises by deleting $e$ and adding a loop at the endpoints of $e$ can be written as 
    \[\begin{bmatrix}
    1&0&0\\
    0&0&\ones_{n-2}\\
    0&\frac1{n-1}\ones_{n-2}^\top&\frac1{n-1}
\end{bmatrix}.
\]
Kemeny's constant for the trailing principal submatrix of order $n-1$ is equal to $n-3+\frac{n-1}{2n-3},$ while Kemeny's constant for the leading principal submatrix of order $1$ is equal to $0$. Consequently, we have $c(e) = n-\frac{3}{2} - (n-3+\frac{n-1}{2n-3}) - 0 = \frac{4n-7}{4n-6}.$ The upper bound of Theorem \ref{thm:bounds} is $\frac{1}{1-\lambda_{n-1}} =1,$ and thus we see that for large $n,$ $c(e)$ is close to that upper bound. 

\end{example}

%%%%%%%%%%%%%%%%%%%%% Fig
\begin{figure}
\begin{center}
    \begin{tabular}{cc}
\begin{tikzpicture}
[
      mycircle/.style={
         circle,
         draw=black,
         fill=none,
       %  fill opacity = 0.3,
      %   text opacity=1,
         inner sep=0pt,
         minimum size=15pt,
         font=\small},
      myarrow/.style={},  
     node distance=0.7cm and 1.3cm
      ]
    \node[mycircle] at (360:0mm) (center) {};
    \foreach \n in {1,...,7}{
        \node[mycircle] at ({\n*360/7}:1.5cm) (n\n) {\n};
        \draw (center)--(n\n);
    }
     \node at (center)  {8} ;
\end{tikzpicture}
    &\quad\quad\quad
\begin{tikzpicture}
  \graph[nodes={draw, circle}, clique, n=5, clockwise, radius=1.5cm]
  {
    1/"1", 2/"5", 3/"4", 4/"3", 5/"2"
  };
  \node [draw, circle, right = 1.2cm of 2] (6) {6};
  \draw (2) edge node[circle] {} (6);
\end{tikzpicture}

    \end{tabular}\caption{On the left, the star graph of Example~\ref{ex1} for $n=8$. On the right, the graph of Example~\ref{ex2}  with $n=6$.}\label{fig:ex}
    \end{center}
    \end{figure}
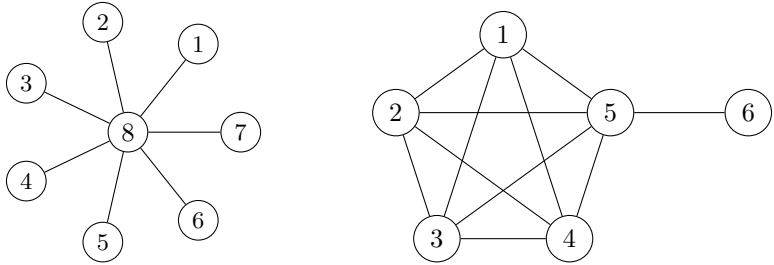
%%%%%%%%%%%%%%%%%%%%% Fig

\begin{example}\label{ex2}
Let $K_{n-1}$ be the complete graph formed by $n-1$ vertices such that for any $i,j\in\{1,\ldots,n-1\}$, $i\ne j$ there is an edge connecting $i$ to $j$.
Let $G$ be the graph formed from $K_{n-1}$ by adding the pendent edge $e=\{n-1,n\}$, see Figure \ref{fig:ex}. The corresponding transition matrix is $\left[\begin{smallmatrix}
	\frac{1}{n-2}(J-I)&\frac{1}{n-2}\ones_{n-2} &0\\
\frac{1}{n-1}\ones_{n-2}^\top &0 &\frac{1}{n-1}\\
0 &1&0
\end{smallmatrix}\right]$, where $J$ is the all-ones matrix of the appropriate order. The eigenvalues of the transition matrix are: $1,-\frac{1}{n-2}$ (with multiplicity $n-3$), and $\frac{1}{2}\left(-\frac{1}{n-2} \pm \sqrt{\frac{4n^2-19n+23}{(n-1)(n-2)^2}}\right).$ We thus find that for $n \ge 3, \lambda_1 =  -\frac{1}{2}\left(\frac{1}{n-2} +  \sqrt{\frac{4n^2-19n+23}{(n-1)(n-2)^2}}\right).$ From these eigenvalues and \eqref{eq:kem}, we deduce that $\kappa(G) = \frac{(n-3)(n-2)}{(n-1)}+ \frac{2n^2-5n+3}{n^2-3n+4}.$

%Denote the pendent edge in $G$ by $e$. 
Deleting the pendent edge $e$  from $G$ and adding a loop at the endpoints of $e$  yields the following transition matrix: \[
\left[\begin{matrix}
	\frac{1}{n-2}(J-I) & \frac{1}{n-2}\ones_{n-2} &0\\\frac{1}{n-1}\ones_{n-2}^\top &\frac{1}{n-1} &0\\0&0&1
\end{matrix}\right].
\]
The eigenvalues for the leading principal submatrix of order $n-1$ are $1, -\frac{1}{n-2}$ (with multiplicity $n-3$) and $-\frac{1}{(n-1)(n-2)}.$ We then find that for that leading principal submatrix, Kemeny's constant is equal to  $\frac{(n-3)(n-2)}{(n-1)} + \frac{(n-1)(n-2)}{n^2-3n+3}.$ It now follows that $c(e) = 
 \frac{(n-3)(n-2)}{(n-1)}+ \frac{2n^2-5n+3}{n^2-3n+4} - \left(\frac{(n-3)(n-2)}{(n-1)} + \frac{(n-1)(n-2)}{n^2-3n+3}\right) - 0 = \frac{2n^2-5n+3}{n^2-3n+4} - \frac{(n-1)(n-2)}{n^2-3n+3}.$ For large $n$ we see that both $c(e)$  and $\frac{1}{1-\lambda_1}$ are near to $1$, so that $c(e)$ is close to the lower bound of Theorem \ref{thm:bounds} for such $n$. 
\end{example}

\section{Centrality of edges in one-path graphs}\label{sec:barbell}
In this section and  the next, 
we prove that for certain classes of graphs, the quantity $c(e)$ behaves in the expected way in the sense that the importance of the edge $e$ connecting two disjoint connected components $\mathc G_1$ and $\mathc G_2$ of the graph is larger the larger is the minimum of the cardinalities
of $\mathc G_1$ and $\mathc G_2$.

A random walk on a one-path graph can be seen as a particular birth-death process, i.e., a Markov chain with tridiagonal transition matrix
\begin{equation}\label{eq:Pqbd}
    P=\begin{bmatrix}
 \theta_0& \lambda_0&\\
\mu_1 & \theta_1 & \lambda_1\\
&\ddots & \ddots & \ddots \\
&&\mu_{n-2} & \theta_{n-2} &\lambda_{n-2}\\
&&&\mu_{n-1}&\theta_{n-1}
\end{bmatrix},
\end{equation}
where $\theta_0=1-\lambda_0\ge 0$, $\theta_i=1-\lambda_i-\mu_i\ge 0$ for $i=1,\ldots,n-2$, $\theta_{n-1}=1-\lambda_{n-1}$, and $0<\lambda_i<1$, $0<\mu_i<1$.

Kemeny's constant for $P$ can be expressed explicitly in different forms. 
Let us first introduce the quantities
\[
\begin{split}
&t(k)=\prod_{\ell=0}^{k-1}\lambda_\ell \prod_{\ell = k+1}^{n-1}\mu_\ell,~~,k=0, \ldots, n-1,\\
    & 
    w(j,k,p)=\prod_{\ell=0}^{j-1}\lambda_\ell\prod_{\ell = j+1}^{p}\mu_\ell \prod_{\ell=p+1}^{k-1}\lambda_\ell \prod_{\ell = k+1}^{n-1}\mu_\ell,~~0\le j < k \le n-1,j\le p \le k-1,
\end{split}
\]
where we interpret an empty product as $1$. 

The following theorem provides two equivalent expressions for $\kappa(P)$, one given in terms of the parameters $t(k)$ and $w(j,k,p)$, and one expressed in terms of the stationary vector $\pig$ of $P$.

\begin{theorem}\label{th:bd}
Let $P$ be the irreducible stochastic $n\times n$ matrix defined in \eqref{eq:Pqbd}. Then
\begin{equation}\label{eq:kqbd1}
\kappa(P)=\frac{\sum_{0\le j<k\le n-1}\sum_{p=j}^{k-1}w(j,k,p)}{\sum_{k=0}^{n-1}t(k)}.
\end{equation}
Equivalently,
\begin{equation}\label{eq:kqbd}
\kappa(P)=\sum_{k=0}^{n-2} \frac{\sigma_k(1-\sigma_k)}{\lambda_k \pi_k},
\quad \sigma_k=\sum_{j=0}^{k}\pi_j,
\end{equation}
where
 $\pig=(\pi_i)_{i=0,\ldots,n-1}$ is such that  $\pig^\top P=\pig^\top$, $\pig^\top\ones=1$. Moreover, we have
 \begin{equation}\label{eq:pi}
 \pi_j = \pi_0 \frac{\prod_{\ell = 0}^{j-1} \lambda_\ell}{\prod_{\ell=1}^j \mu_\ell},~j=1,\ldots,n-1,\quad \hbox{with} \quad \pi_0 = \left(\sum_{j=0}^{n-1} \frac{\prod_{\ell = 0}^{j-1} \lambda_\ell}{ \prod_{\ell=1}^j \mu_\ell}\right)^{-1}.
 \end{equation}
\end{theorem}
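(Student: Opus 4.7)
The plan is to establish the three claims in turn: the stationary vector \eqref{eq:pi}, the Kirkland-style combinatorial formula \eqref{eq:kqbd1}, and the equivalence with Whitt's expression \eqref{eq:kqbd}.

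First, \eqref{eq:pi} will follow from detailed balance. For an irreducible tridiagonal stochastic $P$, the equation $\pig^\top P=\pig^\top$ reduces, in combination with the row-sum identity $P\ones=\ones$, to the relations $\pi_j\lambda_j=\pi_{j+1}\mu_{j+1}$ for $j=0,\ldots,n-2$. Iterating these gives the stated closed form, and the normalization $\pig^\top\ones=1$ fixes $\pi_0$.

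For \eqref{eq:kqbd1}, I would apply formula \eqref{eq:kcf1f2} to the loop-free directed graph of $P$, which is the path $0,1,\ldots,n-1$ in which the arc $i\to i+1$ has weight $\lambda_i$ and the arc $i+1\to i$ has weight $\mu_{i+1}$. Any spanning directed tree with sink $k$ is forced: every arc to the left of $k$ must point right and every arc to the right of $k$ must point left, for a total weight of $t(k)$. Summing over $k$ produces the denominator of \eqref{eq:kqbd1}. A two-tree spanning directed forest with sinks $j<k$ is obtained by deleting exactly one edge $\{p,p+1\}$ with $j\le p\le k-1$ and then orienting the two resulting sub-paths toward $j$ and $k$; such a forest is unique and has weight $w(j,k,p)$. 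Summing over all valid triples $(j,k,p)$ gives the numerator, and \eqref{eq:kcf1f2} produces \eqref{eq:kqbd1}.

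Finally, for \eqref{eq:kqbd}, set $M=\prod_{\ell=1}^{n-1}\mu_\ell$. Using \eqref{eq:pi} one checks that $t(k)=M\pi_k/\pi_0$, so $\sum_k t(k)=M/\pi_0$. The crux of the equivalence is the companion identity
\[
w(j,k,p)=\frac{M\,\pi_j\pi_k}{\pi_0\,\lambda_p\pi_p},
\]
obtained by rewriting the four products defining $w(j,k,p)$ in terms of the ratios $\pi_i/\pi_0$; the factor $1/(\lambda_p\pi_p)$ emerges from the juxtaposition of $\prod_{\ell=j+1}^p\mu_\ell$ and $\prod_{\ell=p+1}^{k-1}\lambda_\ell$ after a short telescoping. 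Once the identity is in hand, swapping the order of summation and using $\sum_{j\le p}\pi_j=\sigma_p$ together with $\sum_{k\ge p+1}\pi_k=1-\sigma_p$ reduces the double sum to $(M/\pi_0)\sum_{p=0}^{n-2}\sigma_p(1-\sigma_p)/(\lambda_p\pi_p)$, and dividing by $\sum_k t(k)=M/\pi_0$ yields \eqref{eq:kqbd}. The main obstacle is precisely this algebraic bookkeeping: one must split each of the four products cleanly at the indices $j$, $p$, and $k$ in order to re-express them as $\pi$-quantities. Everything that follows is a straightforward exchange of summations.
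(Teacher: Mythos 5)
Your proposal is correct. The derivation of \eqref{eq:pi} by detailed balance and of \eqref{eq:kqbd1} by enumerating the forced spanning trees and two-tree forests of the path via \eqref{eq:kcf1f2} is essentially identical to what the paper does. Where you genuinely diverge is in the passage to \eqref{eq:kqbd}: the paper obtains it by quoting Whitt's formula \eqref{eq:witt} for the diagonal entries of the fundamental matrix $Z=(I-P+\ones\pig^\top)^{-1}-\ones\pig^\top$, taking the trace, and exchanging summations; the algebraic equivalence of \eqref{eq:kqbd} and \eqref{eq:kqbd1} via \eqref{eq:pi} is only asserted there, not carried out. You instead prove that equivalence directly, through the identities $t(k)=M\pi_k/\pi_0$ and $w(j,k,p)=M\pi_j\pi_k/(\pi_0\lambda_p\pi_p)$ with $M=\prod_{\ell=1}^{n-1}\mu_\ell$ (both of which I have checked and which do hold), followed by a swap of the order of summation that produces $\sigma_p(1-\sigma_p)$. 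This buys a self-contained proof that does not lean on an external result about the fundamental matrix, at the cost of losing the probabilistic interpretation that Whitt's route supplies; the paper's route, conversely, gives \eqref{eq:kqbd} independently of \eqref{eq:kqbd1} and so the two formulas corroborate one another, whereas in your argument \eqref{eq:kqbd} is a consequence of \eqref{eq:kqbd1}. Either way the logic is sound and complete.
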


\begin{proof}
To show \eqref{eq:kqbd1} we use formula \eqref{eq:kcf1f2}.
Consider the loop-free directed graph associated with $P$. For each $k=0, \ldots, n-1,$ there is one spanning directed tree having $k$ as a sink, and its arc set is $\{\ell \rightarrow \ell+1: \ell =0, \ldots, k-1 \} \cup \{ \ell \rightarrow \ell-1: \ell=k+1, \ldots, n-1\}.$ Evidently the corresponding weight is equal to $t(k)=\prod_{\ell=0}^{k-1}\lambda_\ell \prod_{\ell = k+1}^{n-1}\mu_\ell.$ For the directed forests in $F_2$, suppose that such a directed forest has vertex $j$ as the sink in one directed tree and vertex $k$ as the sink in the other directed tree; without loss of generality, $j <k.$ For each such directed forest, there is a vertex $p$ with $j \le p < p+1\le k$ so that the corresponding arc set is given by 
$\{\ell \rightarrow \ell+1: \ell =0, \ldots, j-1 \} \cup \{ \ell \rightarrow \ell-1: \ell=j+1, \ldots, p\} \cup \{\ell \rightarrow \ell+1: \ell =p+1, \ldots, k-1 \} \cup \{ \ell \rightarrow \ell-1: \ell=k+1, \ldots, n-1\}.$ 
The weight of such a forest is given by $w(j,k,p)=\prod_{\ell=0}^{j-1}\lambda_\ell\prod_{\ell = j+1}^{p}\mu_\ell \prod_{\ell=p+1}^{k-1}\lambda_\ell \prod_{\ell = k+1}^{n-1}\mu_\ell  .$ From \eqref{eq:kcf1f2} we thus find \eqref{eq:kqbd1}.

The expression \eqref{eq:kqbd} can be derived from equation (30) in \cite{whitt}, which we recall below
\begin{equation}\label{eq:witt}
    \pi_j^{-1}Z_{jj}=\sum_{k=0}^{j-1}\frac{\sigma_k^2}{\lambda_k\pi_k}+\sum_{k=j}^{n-1}\frac{(1-\sigma_k)^2}{\lambda_k\pi_k},
\end{equation}
where
\[
Z=(I -P + \ones\pig^\top)^{-1}-\ones\pig^\top.
\]
In fact, from the above equation and from \eqref{eq:keminv} applied with $\bm v=\pig$, we deduce that $\kappa(P)=\hbox{trace}(Z+\ones\pig^\top)-1=\hbox{trace}(Z)$. Therefore, from \eqref{eq:witt} we may write
\[
\kappa(P)=\hbox{trace}(Z)=\sum_{j=0}^{n-1}\pi_j\sum_{k=0}^{j-1}\frac{\sigma_k^2}{\pi_k\lambda_k}+\sum_{j=0}^{n-1} \pi_j\sum_{k=j}^{n-1}\frac{(1-\sigma_j)^2}{\pi_j\lambda_j}.
\]
%%%%%
Changing the order of the two summations in each summand of the above expression we obtain

\[
\kappa(P)
=\sum_{j=0}^{n-2} \frac{\sigma_j^2}{\pi_j\lambda_j}(1-\sigma_j)+
\sum_{j=0}^{n-2}\frac{(1-\sigma_j)^2}{\pi_j\lambda_j}\sigma_j
=\sum_{j=0}^{n-2}
\frac{\sigma_j(1-\sigma_j)}{\pi_j\lambda_j},
\]
which proves \eqref{eq:kqbd}.
Equation \eqref{eq:pi} follows by a direct check.
The equivalence of \eqref{eq:kqbd} and \eqref{eq:kqbd1} can be obtained by substituting the expression of $\pig$ given by \eqref{eq:pi} into \eqref{eq:kqbd}.
\end{proof}

It is interesting to point out that \eqref{eq:kqbd} formally coincides with the truncation to the finite size of the infinite summation in the expression for $\kappa(P)$ given in \cite[Theorem 5.1]{bini2018kemeny} in the case where $P$ is semi-infinite.

\begin{figure}
    \centering
\begin{tikzpicture}[
      mycircle/.style={
         circle,
         draw=black,
         fill=gray,
         fill opacity = 0.3,
         text opacity=1,
         inner sep=0pt,
         minimum size=5pt,
         font=\small},
      myarrow/.style={},  
     node distance=0.7cm and 1.3cm
      ]
      \node[mycircle] (c1) [label=above:$1$]{};
      \node[mycircle,right=of c1] (c2) [label=above:$2$]{};
      \node[mycircle,right=of c2] (c3) [label=above:$3$]{};
      \node[mycircle,right=of c3] (c4) [label=above:$4$]{};
      \node[mycircle,right=of c4] (c5) [label=above:$5$]{};
      \node[mycircle,right=of c5] (c6) [label=above:$6$]{};

   \draw[myarrow] (c1) -- (c2);
   \draw[myarrow] (c4) -- (c5);
   \draw[myarrow] (c2) -- (c3);
   \draw[myarrow] (c3) -- (c4);
  \draw[myarrow] (c5) -- (c6);
  \draw (c1) edge [loop below, in=-60, out=-120, min distance=1cm]  (c1);
   \draw (c6) edge [loop below, in=-60, out=-120, min distance=1cm]  (c6);
    \end{tikzpicture}

\begin{tikzpicture}[
      mycircle/.style={
         circle,
         draw=black,
         fill=gray,
         fill opacity = 0.3,
         text opacity=1,
         inner sep=0pt,
         minimum size=5pt,
         font=\small},
      myarrow/.style={},  
     node distance=0.7cm and 1.3cm
      ]
      \node[mycircle] (c1) [label=above:$1$]{};
      \node[mycircle,right=of c1] (c2) [label=above:$2$]{};
      \node[mycircle,right=of c2] (c3) [label=above:$3$]{};
      \node[mycircle,right=of c3] (c4) [label=above:$4$]{};
      \node[mycircle,right=of c4] (c5) [label=above:$5$]{};
      \node[mycircle,right=of c5] (c6) [label=above:$6$]{};

   \draw[myarrow] (c1) -- (c2);
   \draw[myarrow] (c4) -- (c5);
   \draw[myarrow] (c2) -- (c3);
 %  \draw[myarrow] (c3) -- (c4);
  \draw[myarrow] (c5) -- (c6);
  \draw (c1) edge [loop below, in=-60, out=-120, min distance=1cm]  (c1);
   \draw (c3) edge [loop below, in=-60, out=-120, min distance=1cm]  (c3);
    \draw (c4) edge [loop below, in=-60, out=-120, min distance=1cm]  (c4);
   \draw (c6) edge [loop below, in=-60, out=-120, min distance=1cm]  (c6);
    \end{tikzpicture}

    \caption{Class of graphs formed by $n$ vertices. Vertex $i$ is connected by an edge to vertex $i+1$, for $i=1,\ldots,n-1$; vertices 1 and $n$ may have a loop. Removing the edge $(m,m+1)$ with the methodology of \cite{abcmp} yields two disjoint graphs that belong to the same class.}
    \label{fig:1}
\end{figure}
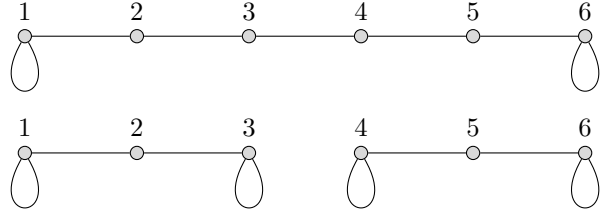

Let us consider the class of graphs depicted in Figure \ref{fig:1}, associated with the $n\times n$ adjacency matrix
\begin{equation}\label{eq:adj}
A_n=A_n(\alpha,\beta)=\begin{bmatrix}
\alpha&1&\\
1&0&1\\
&\ddots & \ddots & \ddots \\
&&1 &0 &1\\
&&&1&\beta
\end{bmatrix}\in\mathbb{R}^{n\times n}.
\end{equation}

We have the following result which generalizes Proposition 3.2 in \cite{faught2021} concerning path graphs.

\begin{theorem}\label{thm:1}
Kemeny's constant $\kappa_n(\alpha,\beta)$ of the graph associated with the adjacency matrix \eqref{eq:adj} is
\[
\kappa_n(\alpha,\beta)=(n-1)\frac{\frac23n^2+n(\alpha+\beta-\frac43)+\alpha\beta-\alpha-\beta+1}{2n+\alpha+\beta-2}.
\]
\end{theorem}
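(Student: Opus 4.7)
The plan is to apply the birth--death formula \eqref{eq:kqbd} of Theorem \ref{th:bd} directly to the stochastic matrix $P=D^{-1}A_n(\alpha,\beta)$. Writing the indices $0,\ldots,n-1$ for the vertices $1,\ldots,n$, the row sums of $A_n$ are $d_0=\alpha+1$, $d_k=2$ for $1\le k\le n-2$, and $d_{n-1}=\beta+1$, so that the birth--death parameters are $\lambda_0=1/(\alpha+1)$, $\lambda_k=1/2$ for $1\le k\le n-2$, $\mu_k=1/2$ for $1\le k\le n-2$, and $\mu_{n-1}=1/(\beta+1)$. A direct application of \eqref{eq:pi}, or equivalently the fact that $\pi_k\propto d_k$ for undirected graphs, yields
\[
\pi_0=\frac{\alpha+1}{N},\qquad \pi_k=\frac{2}{N}\;\;(1\le k\le n-2),\qquad \pi_{n-1}=\frac{\beta+1}{N},
\]
with $N:=\alpha+\beta+2n-2$.

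The key observation that makes the computation tractable is that the product $\lambda_k\pi_k$ is constant in $k$: at $k=0$ we have $\lambda_0\pi_0=1/N$, and for $1\le k\le n-2$ we have $\lambda_k\pi_k=(1/2)(2/N)=1/N$ as well. Next, I would compute the partial sums
\[
\sigma_k=\sum_{j=0}^{k}\pi_j=\frac{\alpha+1+2k}{N},\qquad 1-\sigma_k=\frac{\beta+2n-3-2k}{N},
\]
valid for $k=0,\ldots,n-2$. Plugging these into \eqref{eq:kqbd} gives the clean expression
\[
\kappa_n(\alpha,\beta)=\frac{1}{N}\sum_{k=0}^{n-2}(\alpha+1+2k)(\beta+2n-3-2k).
\]

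The remaining step is purely an algebraic simplification: set $a=\alpha+1$, $b=\beta+2n-3$, note that $a+b=N$ and $b-a=\beta-\alpha+2n-4$, and expand the sum using the closed forms for $\sum_{k=0}^{n-2}1,\;\sum_{k=0}^{n-2}k,\;\sum_{k=0}^{n-2}k^2$. This produces
\[
\kappa_n(\alpha,\beta)=\frac{n-1}{N}\!\left[ab+(b-a)(n-2)-\tfrac{2(n-2)(2n-3)}{3}\right],
\]
and after collecting the $\alpha\beta$, $\alpha$, $\beta$ and constant terms, the bracket reduces to $\alpha\beta+(n-1)(\alpha+\beta)+\tfrac{2n^2-4n+3}{3}$, which matches the numerator of the claimed formula once one writes $\tfrac{2n^2-4n+3}{3}=\tfrac{2}{3}n^2-\tfrac{4}{3}n+1$.

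The only real obstacle is keeping the algebra clean; the structural content is that $\lambda_k\pi_k$ is $k$--independent, which collapses the Kemeny sum to a linear combination of $\sum k$ and $\sum k^2$. An alternative route would be to extend Corollary \ref{cor:2} to trees with two loops (one at each endpoint), but since \eqref{eq:kqbd} applies verbatim to the whole tridiagonal family, the birth--death approach is shorter and also exposes the quadratic dependence on $n$ that drives the centrality estimates of the next section.
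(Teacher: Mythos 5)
Your proposal is correct, and it takes a genuinely different route from the paper's proof. The paper establishes Theorem~\ref{thm:1} via the forest-counting formula \eqref{eq:kqbd1}: it enumerates the weights $t(k)$ of the spanning directed trees and the weights $w(j,k,p)$ of the two-tree spanning forests of the loop-free directed graph of $P$ (seven separate cases, depending on whether the sinks are interior or endpoint vertices) and then combines them. You instead invoke the companion formula \eqref{eq:kqbd} of the same Theorem~\ref{th:bd}, expressed through the stationary vector. Your key structural observation --- that $\lambda_k\pi_k=1/N$ with $N=2n+\alpha+\beta-2$ is independent of $k$, because $\pi_k\propto d_k$ and $\lambda_k=1/d_k$ for every $k$ in the range $0,\dots,n-2$ --- collapses the whole computation to the single sum $\frac1N\sum_{k=0}^{n-2}(\alpha+1+2k)(\beta+2n-3-2k)$, and I have checked that your closed form for the bracket, $\alpha\beta+(n-1)(\alpha+\beta)+\frac{2n^2-4n+3}{3}$, does equal the numerator in the statement. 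What each approach buys: the paper's weight enumeration stays closer to the combinatorial machinery of \cite{kirk:ela} that drives Sections~\ref{sec:tree} and~\ref{sec:2paths}, whereas your route is shorter, less case-heavy, and is in fact the same technique the paper itself uses later for Theorem~\ref{th:add} (where $\pig=\bd/(2n-1)$ is substituted into \eqref{eq:kqbd}); it also makes the quadratic dependence on $n$ visible before any algebra is done. Since both formulas are proved equivalent in Theorem~\ref{th:bd}, the two proofs rest on the same foundation and your argument is complete once the routine sums $\sum k$ and $\sum k^2$ are expanded as you indicate.
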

\begin{proof}
We rely on equation \eqref{eq:kqbd1}.
Observe that, for this graph, 
\begin{description}
\item{i)~} $t(0)=\frac{1}{2^{n-2}(\alpha+1)}$, 
\item{ii)} $t(n-1)=\frac{1}{2^{n-2}(\beta+1)}$,
\item{iii)} $t(k)=\frac{1}{2^{n-3}(\alpha+1)(\beta+1)},\quad  k=1, \ldots, n-2$, 
\item{iv)} $w(j,k,p)=\frac{1}{2^{n-4}(\alpha+1)(\beta+1)}, ~~1\le j < k \le n-2,$ 
\item{v)~} $w(0,k,p)=\frac{1}{2^{n-3}(\beta+1)}, ~~1\le k \le n-2,$ 
\item{vi)} $w(j,n-1,p)=\frac{1}{2^{n-3}(\alpha+1)},~~ j=1, \ldots, k-1,$ 
\item{vii)} $w(0,n-1,p)=\frac{1}{2^{n-2}}$.
\end{description}
Combining the above expressions yields  the desired  representation for $\kappa_n(\alpha,\beta)$.
\end{proof}

From the above results we immediately derive the following consequence

\begin{corollary}
The values of  Kemeny's constant $\kappa(\alpha,\beta)$  of the graph associated with $A_n$ for $\alpha,\beta\in\{0,1\}$ are:   
\begin{enumerate} 
\item $\kappa_n(\alpha,1)=\frac13\frac{n(n-1)(2n+3\alpha-1)}{2n+\alpha-1}$,
%if $\beta=1$, and $\alpha$ is any.

\item
$\kappa_n(0,0)=\frac13 (n^2-2n+\frac32)$, 
%if $\alpha=\beta=0$;

\item
$\kappa_n(0,1)=\kappa_n(1,0)=\frac13 n(n-1)$, 
%if $\alpha=1$, and $\beta=0$, or $\alpha=0$ and $\beta=1$;

\item
$\kappa_n(1,1)=\frac13 (n^2-1)$. % if $\alpha=\beta=1$.

\end{enumerate}
\end{corollary}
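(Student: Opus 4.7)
The proof is a direct specialization of the closed-form formula
\[
\kappa_n(\alpha,\beta)=(n-1)\frac{\tfrac23 n^2+n(\alpha+\beta-\tfrac43)+\alpha\beta-\alpha-\beta+1}{2n+\alpha+\beta-2}
\]
from Theorem~\ref{thm:1}; no additional graph-theoretic or spectral machinery is needed. The plan is to substitute the allowed pairs $(\alpha,\beta)\in\{0,1\}^2$ (leaving $\alpha$ free in one case) and simplify. A preliminary observation is that $\kappa_n(\alpha,\beta)$ is symmetric in $(\alpha,\beta)$, both from the expression above and from the fact that relabeling vertices $i\mapsto n+1-i$ turns $A_n(\alpha,\beta)$ into $A_n(\beta,\alpha)$; this immediately yields the identity $\kappa_n(0,1)=\kappa_n(1,0)$ in item~3.

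First I would derive the general formula of item~1. Setting $\beta=1$ in Theorem~\ref{thm:1}, the constant term $\alpha\beta-\alpha-\beta+1$ collapses to $0$, so the numerator reduces to $\tfrac23 n^2 + n(\alpha-\tfrac13)$, which factors as $\tfrac{n}{3}(2n+3\alpha-1)$. Combining with the factor $(n-1)$ and the denominator $2n+\alpha-1$ yields
\[
\kappa_n(\alpha,1)=\frac{1}{3}\,\frac{n(n-1)(2n+3\alpha-1)}{2n+\alpha-1},
\]
which is item~1. Items~3 and~4 then follow as the specializations $\alpha=0$ and $\alpha=1$: with $\alpha=0$ the factor $2n+3\alpha-1=2n-1$ cancels against $2n+\alpha-1=2n-1$, giving $\kappa_n(0,1)=\tfrac{1}{3}n(n-1)$; with $\alpha=1$ the ratio $(2n+2)/(2n)=(n+1)/n$ combines with $n(n-1)$ to give $\kappa_n(1,1)=\tfrac{1}{3}(n^2-1)$.

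Finally, for item~2 I would substitute $\alpha=\beta=0$ directly. The numerator becomes $\tfrac23 n^2-\tfrac43 n+1$ and the denominator becomes $2n-2=2(n-1)$, so the factor $(n-1)$ cancels and one is left with $\tfrac{1}{2}(\tfrac23 n^2-\tfrac43 n+1)=\tfrac13(n^2-2n+\tfrac32)$, which is item~2. There is no real obstacle beyond careful algebraic simplification; the only minor pitfall is to remember that in items~2 and~4 the factor $(n-1)$ in the numerator of Theorem~\ref{thm:1} has a matching factor in the denominator and should be canceled before reading off the compact form.
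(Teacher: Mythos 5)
Your proposal is correct and matches the paper's approach exactly: the corollary is stated there as an immediate consequence of Theorem~\ref{thm:1}, obtained by the same direct substitution and simplification you carry out. All of your algebraic specializations (including the cancellation of $2n-1$ in item~3 and of $n-1$ in items~2 and~4) check out.
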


It is interesting to point out that the expression of $\kappa_n(0,0)$ coincides with the one given in \cite{kirk:ela,breen19}.
Observe that, by removing the edge connecting vertex $m$ with vertex $m+1$, with $1\le m<n$, in the graph associated with the adjacency matrix $A_n(\alpha,\beta)$ in equation \eqref{eq:adj} by means of a symmetric rank-1 correction, we obtain the block diagonal adjacency matrix 
\[
\widehat A=
\begin{bmatrix}
A_m(\alpha,0)&0\\0& A_{n-m}(0,\beta)
\end{bmatrix}.
\]

\begin{theorem}
The centrality $c(e)$ of the edge $e=(m,m+1)$ in the graph $\mathc G$ associated with the adjacency matrix $A_n$ in \eqref{eq:adj},  is 
\[
c(e)=\kappa_n(\alpha,\beta) -\kappa_m(\alpha,1)-\kappa_{n-m}(\beta,1) .
\]
In particular, for $\alpha=\beta=0$ we have
$c(e)=\frac13(2m(n-m)-n+\frac32)$, while for $\alpha=\beta=1$ we have $c(e)=\frac13(2m(n-m)+1)$.
\end{theorem}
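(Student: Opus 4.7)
My plan is to view this theorem as an almost direct corollary of the definition \eqref{eq:centr} combined with Theorem~\ref{thm:1}. First I would observe that since the graph $\mathc G$ associated with $A_n(\alpha,\beta)$ is a one-path graph (plus possibly loops at the endpoints), every edge of the form $e=(m,m+1)$ with $1\le m<n$ is a cut-edge: removing it separates the vertex set into $\{1,\ldots,m\}$ and $\{m+1,\ldots,n\}$. Therefore the cut-edge case of \eqref{eq:centr} applies, giving $c(e)=\kappa(\mathc G)-\kappa(\hat{\mathc G}_1)-\kappa(\hat{\mathc G}_2)$.

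Next I would identify the two resulting subgraphs. As already noted in the paragraph preceding the theorem, the rank-1 symmetric correction that deletes the edge $(m,m+1)$ and inserts loops of unit weight at vertices $m$ and $m+1$ produces the block-diagonal adjacency matrix $\widehat A=\diag(A_m(\alpha,0),A_{n-m}(0,\beta))$ in the unloop-augmented indexing; however, since the loops of weight $1$ added by the cut-edge procedure sit precisely at the interface vertices, the actual adjacency matrices of $\hat{\mathc G}_1$ and $\hat{\mathc G}_2$ are $A_m(\alpha,1)$ and $A_{n-m}(1,\beta)$, respectively (the ``$1$'' being the added loop-weight at the vertex formerly incident to $e$). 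The formula in Theorem~\ref{thm:1} is manifestly symmetric in $\alpha$ and $\beta$, so $\kappa_{n-m}(1,\beta)=\kappa_{n-m}(\beta,1)$. Applying Theorem~\ref{thm:1} to each of the three graphs $\mathc G,\hat{\mathc G}_1,\hat{\mathc G}_2$ yields the main identity
\[
c(e)=\kappa_n(\alpha,\beta)-\kappa_m(\alpha,1)-\kappa_{n-m}(\beta,1).
\]

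For the two specializations, I would simply substitute the closed forms from the preceding corollary. In the case $\alpha=\beta=0$ this gives
\[
c(e)=\tfrac{1}{3}\bigl(n^2-2n+\tfrac{3}{2}\bigr)-\tfrac{1}{3}m(m-1)-\tfrac{1}{3}(n-m)(n-m-1),
\]
and expanding $(n-m)(n-m-1)=n^2-2nm+m^2-n+m$ collapses the right-hand side to $\tfrac{1}{3}\bigl(2m(n-m)-n+\tfrac{3}{2}\bigr)$. In the case $\alpha=\beta=1$ the same procedure yields
\[
c(e)=\tfrac{1}{3}(n^2-1)-\tfrac{1}{3}(m^2-1)-\tfrac{1}{3}\bigl((n-m)^2-1\bigr)=\tfrac{1}{3}\bigl(2m(n-m)+1\bigr),
\]
using $n^2-m^2-(n-m)^2=2m(n-m)$.

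The proof is therefore essentially bookkeeping; there is no serious obstacle, because the nontrivial work, namely the explicit closed-form evaluation of $\kappa_n(\alpha,\beta)$, has already been carried out in Theorem~\ref{thm:1}. The only point requiring care is matching the boundary loop-weights on the two component graphs correctly: one must check that adding a loop of weight $a_{ij}=1$ at vertex $m$ (respectively $m+1$) converts the interior-type zero diagonal entry into a $1$, so that the components are of the form $A_m(\alpha,1)$ and $A_{n-m}(1,\beta)$ rather than $A_m(\alpha,0)$ and $A_{n-m}(0,\beta)$.
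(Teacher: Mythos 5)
Your proof is correct and follows essentially the same route as the paper, whose entire proof is the one-line remark that the result ``follows from Theorem~\ref{thm:1} in view of the definition \eqref{eq:centr} of $c(e)$''; you have simply written out the bookkeeping the paper leaves implicit, and your algebra for both specializations checks out. Your attention to the loop weights is well placed: the components really are $A_m(\alpha,1)$ and $A_{n-m}(1,\beta)$ (consistent with the terms $\kappa_m(\alpha,1)$ and $\kappa_{n-m}(\beta,1)=\kappa_{n-m}(1,\beta)$ in the statement), and the display $\widehat A=\diag(A_m(\alpha,0),A_{n-m}(0,\beta))$ preceding the theorem appears to be a typo, since the rank-one correction \eqref{eq:rk1} does add unit loops at vertices $m$ and $m+1$.
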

\begin{proof}
It follows from Theorem \ref{thm:1} in view of the definition \eqref{eq:centr} of $c(e)$.
\end{proof}

It is interesting to observe that the edges close to the centre are those having the highest importance and that the importance decreases monotonically to its minimum value  moving towards the extreme edges of the graph.

By using the same approach we can provide an explicit expression for  Kemeny's constant of  the graph obtained by adding a loop at vertex $k$ in the one-path graph associated with the $n\times n$ adjacency tridiagonal matrix trid$(1,0,1)$. In fact, we have the following

\begin{theorem}\label{th:add}
    Let $A_n$ be the $n\times n$ adjacency tridiagonal matrix corresponding to a one-path graph with a loop at vertex $k$, i.e., $A_n=\hbox{\rm trid}(1,0,1)+\be_k \be_k^\top$. Then $\bd=A_n\ones=2\cdot\ones-\be_1-\be_n+\be_k$, $\pig=\bd/(2n-1)$ so that \eqref{eq:kqbd} yields
    \[
      \kappa(A_n)=(2n^3-3n^2+(7-6k)n+6k(k-1))/(6n-3).
    \]
\end{theorem}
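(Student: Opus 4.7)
The first two claims are routine. Each interior row of $\hbox{\rm trid}(1,0,1)$ sums to $2$, the first and last rows sum to $1$, and the rank-one addition $\be_k \be_k^\top$ bumps the $k$th diagonal entry up by $1$; this gives $\bd = 2\cdot\ones - \be_1 - \be_n + \be_k$ and $\bd^\top\ones = 2n-1$. Since $A_n$ is symmetric, $\bd^\top P = (D_A\ones)^\top D_A^{-1}A_n = \ones^\top A_n = \bd^\top$, so by uniqueness $\pig = \bd/(2n-1)$.

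For Kemeny's constant, the plan is to invoke \eqref{eq:kqbd} of Theorem \ref{th:bd}, after relabeling the vertices $0,1,\ldots,n-1$ so that the loop sits at position $k-1$ (matching the $0$-indexing used in that formula). The crucial simplification is that the upper subdiagonal entries of $P$ satisfy $\lambda_j=1/d_j$, while $\pi_j = d_j/(2n-1)$, so the factor $\lambda_j\pi_j$ collapses to the constant $1/(2n-1)$. Hence \eqref{eq:kqbd} reduces to
\[
\kappa(A_n) = (2n-1)\sum_{j=0}^{n-2}\sigma_j(1-\sigma_j),
\]
and a direct reading of cumulative partial sums of $\bd$ yields the piecewise description $\sigma_j = (2j+1)/(2n-1)$ for $0\le j<k-1$ and $\sigma_j = (2j+2)/(2n-1)$ for $k-1\le j\le n-2$.

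The remaining work is purely algebraic: after substituting these $\sigma_j$ and multiplying through by $(2n-1)^2$, the sum splits as
\[
2\sum_{j=0}^{k-2}(2j+1)(n-1-j) + 2\sum_{j=k-1}^{n-2}(j+1)(2n-3-2j).
\]
The change of variable $\ell = n-2-j$ turns the second piece into one of the same form as the first, with $n-k$ playing the role of $k-1$, so both evaluate via the elementary identities for $\sum j$ and $\sum j^2$ to cubics in $k$ and in $n-k$, respectively. The main obstacle is to verify that the $k^3$ and $(n-k)^3$ contributions cancel when the two pieces are combined, leaving $2n^3-3n^2+(7-6k)n+6k(k-1)$; dividing by $3(2n-1)=6n-3$ then gives the stated expression. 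Small sanity checks such as $n=2,\,k=1$ (producing $\kappa=2/3$) and $n=3,\,k=1$ (producing $\kappa=2$) confirm the cancellations before the full simplification is carried out.
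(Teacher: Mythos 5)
Your proposal is correct and follows exactly the route the paper intends: the theorem is stated with no separate proof beyond the remark that the formula follows from \eqref{eq:kqbd} once $\bd$ and $\pig$ are identified, and you carry out precisely that computation, with the key observation $\lambda_j\pi_j=1/(2n-1)$ and the piecewise form of $\sigma_j$ both correct. The only quibble is a bookkeeping looseness at the end (the displayed sum evaluates to the stated numerator divided by $3$, so that dividing by $2n-1$ rather than by $3(2n-1)$ yields $\kappa$), but your sanity checks at $n=2$ and $n=3$ confirm the intended identity.
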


\section{Centrality of edges in trees formed by three branches}\label{sec:2paths}

Consider the graph depicted in Figure \ref{fig:2g}, which  generalizes the graphs $E_{n,m}$ introduced in \cite{KD} and the one-path graphs of Section \ref{sec:barbell}. The graph is a tree with root in vertex $1$, formed by three branches of lengths $p$, $q$, and $r$, respectively. We refer to this graph as $E_{p,q,r}$. Moreover, we denote by $E_{p,q,r,k}$ the graph obtained from $E_{p,q,r}$ by adding a loop at vertex $k$. We also denote by $F_{n,k}$ the one-path graphs formed by $n$ vertices with a loop at vertex $k$.

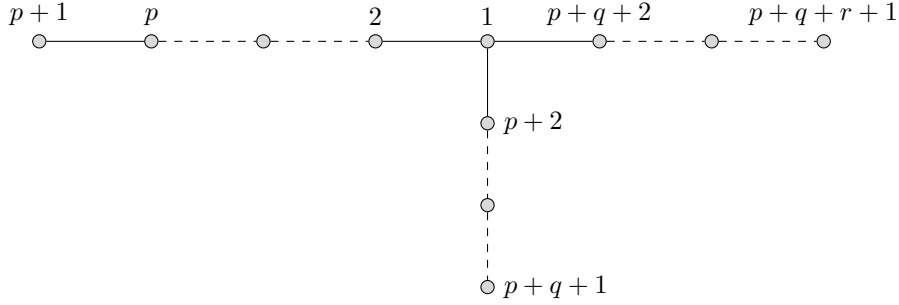
\begin{figure}
    \centering
\begin{tikzpicture}[
      mycircle/.style={
         circle,
         draw=black,
         fill=gray,
         fill opacity = 0.3,
         text opacity=1,
         inner sep=0pt,
         minimum size=5pt,
         font=\small},
      myarrow/.style={},  
     node distance=0.9cm and 1.3cm
      ]
      \node[mycircle] (c1) [label=above:$p+1$]{};
      \node[mycircle,right=of c1] (c2) [label=above:$p$]{};
      \node[mycircle,right=of c2] (c3) {};
      \node[mycircle,right=of c3] (c4) [label=above:$2$]{};
      \node[mycircle,right=of c4] (c5) [label=above:$1$]{};
      \node[mycircle,right=of c5] (c9) [label=above:$p+q+2$]{};
%      \node[mycircle,right=of c9] (c10)[label=above:$p+q+3$]{};
      \node[mycircle,right=of c9] (c11) {};
      \node[mycircle,right=of c11] (c12)[label=above:$p+q+r+1$]{};
      \node[mycircle,below=of c5] (c6) [label=right:$p+2$]{};
      \node[mycircle,below=of c6] (c7) {};
      \node[mycircle,below=of c7] (c8)  [label=right:$p+q+1$]{};

   \draw[myarrow] (c1) -- (c2);
   \draw[myarrow] (c4) -- (c5);
   \draw[style=dashed] (c2) -- (c3);
   \draw[style=dashed] (c3) -- (c4);
  \draw[myarrow] (c5) -- (c9);
 %  \draw[myarrow] (c9) -- (c10);
   \draw[myarrow] (c5) -- (c6);
   \draw[style=dashed] (c6) -- (c7);
   \draw[style=dashed] (c7) -- (c8);
   \draw[style=dashed] (c9) -- (c11);
   \draw[style=dashed] (c11) -- (c12);
     
     %\draw [myarrow] (c4.250) -- node[sloped,font=\small,above,rotate=180] {10} (c2.110);
    \end{tikzpicture}
\caption{Graph of kind $E_{p,q,r}$ formed by 3 branches.}
    \label{fig:2g}
\end{figure}

Observe that removing any edge from the graph $E_{p,q,r}$, and adding two loops at the vertices of the removed edge, yields two disjoint graphs where
\begin{itemize}
    \item either both graphs are formed by a single path (if the removed edge has $1$ as a vertex), one with a loop at a terminal vertex, the other one with a loop at the root vertex $1$; 
    \item or only one graph is formed by a single path with a loop at a terminal vertex, and the other graph is of the kind $E_{p',q',r',k}$ with suitable values of  $p'$, $q'$, $r'$ and with a loop at a terminal vertex $k$.
\end{itemize} 
Without loss of generality, we may assume that the removed edge is $(i,i+1)$ for $2\le i\le p$, that is, $i$ belongs to the first branch. The other cases can be treated by interchanging the roles of $p,q$, and $r$.

More precisely, we have the following result.
\begin{theorem}\label{th:cases}
Denote by $\mathc G_1$ and $\mathc G_2$ the two disjoint graphs obtained by removing the edge $(i,i+1)$ to the graph of Figure \ref{fig:2g} and adding a loop at vertex $i$ and at vertex $i+1$. We have the following cases. 
\begin{itemize}
    \item If $1< i\le p$, then $\mathc G_1 = F_{p-i+1,1}$,
    $\mathc G_2 = E_{i-1,q,r,i}$;
    \item if $i=1$, then
    $\mathc G_1 = F_{p,1}$,
    $\mathc G_2 = F_{q+r+1,q+1}$.
\end{itemize}
\end{theorem}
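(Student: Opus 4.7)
The plan is to perform a direct structural case analysis based on the position of the removed edge $(i,i+1)$ in the first branch of $E_{p,q,r}$, and then verify that the two resulting components (with the added loops at the cut) match the stated canonical graphs up to the labeling conventions of $F_{n,k}$ and $E_{p,q,r,k}$. No eigenvalue or Kemeny-constant machinery enters here; the statement is purely about graph isomorphism after edge removal.

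First, I would dispose of the generic case $1 < i \le p$. Since the edge $(i,i+1)$ lies strictly inside the first branch and does not touch the unique degree-$3$ vertex $1$, removing it splits the tree into two connected pieces. The piece containing vertex $p+1$ consists exactly of the vertex set $\{i+1,i+2,\ldots,p+1\}$, which carries the induced path-structure inherited from the first branch; it has $p-i+1$ vertices, and the added loop sits at its endpoint $i+1$. Relabeling $i+1\mapsto 1,\,i+2\mapsto 2,\ldots$ identifies this subgraph with $F_{p-i+1,1}$. The other piece retains the root $1$ together with its degree-$3$ junction; its first branch is truncated to $\{1,2,\ldots,i\}$ of length $i-1$, the second and third branches are unchanged, and the loop now sits at the new endpoint $i$ of the first branch. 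This is exactly $E_{i-1,q,r,i}$.

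Next, for the boundary case $i=1$, the edge being removed is incident to the root, so care is needed on the side containing vertex $1$. The side not containing $1$ is the vertex set $\{2,\ldots,p+1\}$, a path on $p$ vertices with the loop added at $2$; relabeling as above identifies it with $F_{p,1}$. For the side containing $1$, after the removal vertex $1$ has lost its neighbor in the first branch and retains only the two neighbors $p+2$ and $p+q+2$, i.e.\ it is of degree $2$. Hence the three-branch structure collapses: concatenating the second branch reversed, then vertex $1$, then the third branch in order, produces the path
\[
p+q+1,\,p+q,\,\ldots,\,p+2,\,1,\,p+q+2,\,p+q+3,\,\ldots,\,p+q+r+1,
\]
of length $q+1+r=q+r+1$. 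The loop is at vertex $1$, whose position in this path is $q+1$ counted from the left end. Thus, after canonical relabeling of the path vertices $1,2,\ldots,q+r+1$, the component is $F_{q+r+1,q+1}$.

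The main obstacle, such as it is, is simply bookkeeping: one must verify the index shift in the $F$ labeling in each subcase and, most importantly, correctly locate vertex $1$ in the collapsed path of the $i=1$ case so that the loop parameter is $q+1$ and not $q$ or $r+1$. Once the enumeration of the path above is written out explicitly, these indices are forced. Everything else follows from the fact that in $E_{p,q,r}$ vertex $1$ is the unique vertex of degree $3$, so removing an edge disjoint from $1$ preserves the branching structure on one side, while removing an edge incident to $1$ turns the remaining side into a plain one-path graph.
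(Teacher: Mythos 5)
Your proof is correct and takes essentially the same approach as the paper, which in fact states Theorem~\ref{th:cases} without proof as an immediate consequence of the structural observations preceding it; your case analysis and index bookkeeping (in particular locating the loop at position $q+1$ in the collapsed path when $i=1$) simply make that implicit argument explicit and are consistent with the labeling of Figure~\ref{fig:2g}.
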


Therefore,  for providing an explicit expression of  Kemeny's  constant of the graphs $\mathc G_1$ and $\mathc G_2$ % obtained by removing an edge of $\mathc G$ and adding two loops 
it is sufficient to have  general expressions for $\kappa(E_{p,q,r,p+1})$ and $\kappa(F_{n,i})$ for $1<i\le p$. 

We rely on Theorem
\ref{th:add} for the latter quantity. Concerning the former quantity, because of  Remark~\ref{rem:alt}
%Corollary \ref{cor:2}, 
it is sufficient to provide  explicit expressions for
$\ones^\top\Delta \ones$ and $\ones^\top\Delta \be_{p+1}$.

Concerning the graph $E_{p,q,r}$ it is not difficult to verify that 
the distance matrix has the following form
\[
\Delta=\begin{bmatrix}
0&\bu_p^\top&\bu_q^\top&\bu_r^\top\\
\bu_p&T_p&H_{pq}&H_{pr}\\
\bu_q&H_{pq}^\top&T_q&H_{qr}\\
\bu_r&H_{pr}^\top&H_{qr}^\top&T_r
\end{bmatrix}
\]
where, $\bu_n^\top=[1,2,\ldots,n]$,
$T_n$ is the $n\times n$ Toeplitz matrix with entries $t_{i,j}=(|i-j|)$,
while $H_{mn}$ is the $m\times n$ Hankel matrix with entries $i+j$, $i=1,\ldots,m$, $j=1,\ldots,n$.

Denoting $s=p+q+r$, we have
\begin{equation}\label{eq:toep} %CHECKED!
\begin{aligned}
    &\ones_n^\top T_n\ones_n=\frac13 (n^3-n)\\   
    &\ones_{q}^\top  H_{q,p}\ones_p=
    \frac12 pq(q+p+2),\\
    &\ones^\top \Delta\ones=\frac13 s^3+s^2+\frac23 s-2pqr. 
    %,\\
    %&\bv^\top \Delta \bv=2s,\\
   %& \bv^\top \Delta\ones=-s^2-s.
    \end{aligned}
\end{equation}
From the above structural properties,
%we find that
%\begin{equation}\label{eq:dDd}
%\bd^\top \Delta \bd=4\ones^\top \Delta\ones+4\bv^\top \Delta\ones+%\bv^\top \Delta \bv
%\end{equation}
we have the following result.
\begin{theorem}\label{th:8}
For the graph $E_{p,q,r}$ in Figure \ref{fig:2g} formed by three branches we have
\[
\bd^\top \Delta \bd=\frac43 s^3+\frac23 s-8pqr,\quad s=p+q+r.
%\frac43(p+n)^3-4(n^2+p^2)+8mp(m-n-1)+\frac{14}3(n+p)-2.
\]
\end{theorem}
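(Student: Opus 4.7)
The plan is to perform a direct algebraic reduction, using the decomposition $\bd = 2\ones + \bv$ together with the closed-form expressions already recorded in~\eqref{eq:toep}. The identity~\eqref{eq:dDd}
\[
\bd^\top \Delta \bd = 4\ones^\top\Delta\ones + 4\bv^\top\Delta\ones + \bv^\top\Delta\bv
\]
follows from expanding $(2\ones+\bv)^\top \Delta (2\ones+\bv)$ and using the symmetry of the distance matrix $\Delta$ to combine the two cross terms $2\bv^\top\Delta\ones$ and $2\ones^\top\Delta\bv$. So the job reduces to substituting the three boxed quantities from~\eqref{eq:toep} into this decomposition and collecting terms.

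Next, I would substitute: using $\ones^\top\Delta\ones = \tfrac{1}{3}s^3 + s^2 + \tfrac{2}{3}s - 2pqr$, $\bv^\top\Delta\ones = -s^2 - s$, and $\bv^\top\Delta\bv = 2s$, one obtains
\[
\bd^\top \Delta \bd \;=\; \Bigl(\tfrac{4}{3}s^3 + 4s^2 + \tfrac{8}{3}s - 8pqr\Bigr) + \bigl(-4s^2 - 4s\bigr) + 2s.
\]
The $s^2$ terms cancel, and the $s$ coefficients add as $\tfrac{8}{3} - 4 + 2 = \tfrac{2}{3}$, leaving exactly $\tfrac{4}{3}s^3 + \tfrac{2}{3}s - 8pqr$ as claimed.

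There is no real obstacle once~\eqref{eq:toep} is in hand; the substantive content sits in those three identities, which are the input to this theorem. Should one need to re-derive them, the arguments are as follows: the formula for $\ones^\top\Delta\ones$ comes from summing the block contributions of $\Delta$, using $\ones_n^\top T_n\ones_n = \tfrac{1}{3}(n^3-n)$, $\ones_m^\top H_{mn}\ones_n = \tfrac{1}{2}mn(m+n+2)$, and $\bu_n^\top\ones_n = n(n+1)/2$, then simplifying with the Newton-type identities for $s = p+q+r$. The value $\bv^\top\Delta\bv = 2s$ is immediate once one observes that the nonzero entries select the three leaves together with the root, and the distance from the root to leaf~$i$ is $p,q,r$ respectively while leaf-to-leaf distances go through the root. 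Finally, $\bv^\top\Delta\ones = -s^2-s$ follows from the identity $\sum_j \Delta_{L_i,j} - \sum_j \Delta_{1,j} = \ell_i(s-\ell_i)$, where $\ell_i \in \{p,q,r\}$ is the length of branch $i$, summed over the three branches.
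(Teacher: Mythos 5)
Your proof is correct and follows exactly the paper's route: the paper's own proof of Theorem~\ref{th:8} simply states that the result follows from \eqref{eq:toep} and \eqref{eq:dDd}, and your substitution and cancellation of the $s^2$ terms (with $\tfrac{8}{3}-4+2=\tfrac{2}{3}$) is the intended computation. Your additional sketches of how to re-derive the identities in \eqref{eq:toep} go beyond what the paper records, but they are consistent with it and check out.
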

\begin{proof}
        The expression follows from \eqref{eq:toep} and Remark \ref{rem:bapat}.
\end{proof}

Concerning the quantity $\bd^\top \Delta \be_{p+1}$, it is easy to verify that 
\begin{equation}\label{eq:15}
    \bd^\top \Delta \be_{p+1}=s^2-2qr.
\end{equation}

The following theorem summarizes the above results. 

\begin{theorem}\label{th:9}
 Kemeny's constant for the graph $E_{p,q,r}$ is given by
\[
\kappa(E_{p,q,r})=\frac16(2s^2+1)-\frac{2pqr}s.
\]
     Kemeny's constant for the graph $E_{p,q,r,p+1}$ is given by
    \[
    \kappa(E_{p,q,r,p+1})=\frac13 s(s+1)-2qr\frac{2p+1}{2s+1}.
    \]
    \end{theorem}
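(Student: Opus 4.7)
The plan is to derive both formulas by direct substitution of the structural ingredients already established (Theorem~\ref{th:8} and equation~\eqref{eq:15}) into the tree-based closed-form expressions for Kemeny's constant developed in Section~\ref{sec:tree}. Nothing new about the combinatorics of $E_{p,q,r}$ is needed; the work is purely algebraic consolidation.

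For the first formula, I would observe that $E_{p,q,r}$ is a loop-free tree on $n = p+q+r+1 = s+1$ vertices, so the corollary immediately following Theorem~\ref{th:kirk} applies and gives
\[
\kappa(E_{p,q,r}) \;=\; \frac{\bd^\top \Delta \bd}{4(n-1)} \;=\; \frac{\bd^\top \Delta \bd}{4s}.
\]
Then I would plug in $\bd^\top \Delta \bd = \tfrac{4}{3}s^3 + \tfrac{2}{3}s - 8pqr$ from Theorem~\ref{th:8} and simplify: dividing by $4s$ gives $\tfrac{s^2}{3} + \tfrac{1}{6} - \tfrac{2pqr}{s}$, which is the claimed $\tfrac{1}{6}(2s^2+1) - \tfrac{2pqr}{s}$.

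For the second formula, $E_{p,q,r,p+1}$ is a tree with a single unit-weight loop attached at vertex $k = p+1$, so Corollary~\ref{cor:2} applies with $w = 1$ and $n = s+1$, giving denominator $2(2n-2+w) = 2(2s+1)$. Substituting Theorem~\ref{th:8} and equation~\eqref{eq:15} yields
\[
\kappa(E_{p,q,r,p+1}) \;=\; \frac{\tfrac{4}{3}s^3 + \tfrac{2}{3}s - 8pqr + 2(s^2 - 2qr)}{2(2s+1)}.
\]
Splitting the numerator into the part that depends only on $s$ and the part containing $qr$, and collecting $-8pqr - 4qr = -4qr(2p+1)$, reduces the question to simplifying $(\tfrac{4}{3}s^3 + 2s^2 + \tfrac{2}{3}s)/(2(2s+1))$. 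Here the one small observation to spot is the factorization $2s^2 + 3s + 1 = (2s+1)(s+1)$, which causes the factor $2s+1$ in the denominator to cancel and leaves $\tfrac{s(s+1)}{3}$. The result is $\tfrac{1}{3}s(s+1) - \tfrac{2qr(2p+1)}{2s+1}$, as claimed.

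There is no real obstacle to this proof: the needed tree formulas for Kemeny's constant (with and without a loop) have been proved in Section~\ref{sec:tree}, and the graph-specific quantities $\bd^\top \Delta \bd$ and $\bd^\top \Delta \be_{p+1}$ have just been computed. The only cosmetic care required is bookkeeping in the algebraic simplification and noticing the factorization $2s^2+3s+1 = (2s+1)(s+1)$, which is what makes the second formula come out in the clean form stated.
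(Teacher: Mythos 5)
Your proposal is correct and follows essentially the same route as the paper, whose proof is precisely the one-line observation that the two formulas follow from Theorem~\ref{th:8}, Corollary~\ref{cor:2} (with $w=1$, $n=s+1$), and equation~\eqref{eq:15}; you have simply written out the algebra, including the key cancellation $2s^2+3s+1=(2s+1)(s+1)$, and it checks out.
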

    \begin{proof}
        The expressions follow from Theorem \ref{th:8}, Corollary \ref{cor:2} and \eqref{eq:15}.
    \end{proof}

Combining the above theorem and Corollary \ref{cor:2} we arrive at the following result.

\begin{theorem}\label{th:ci}
The centrality measure $c(i)$ of the edge $(i,i+1)$ of the graph $E_{p,q,r}$ in Figure \ref{fig:2g} for $1\le i\le p$ is
\[
c(i)=
\frac23\alpha_i\beta_i+2qr\frac{2i-1}{2i-1+2q+2r}-\gamma,
\]
where $\alpha_i=p+1-i$ and $\beta_i=q+r+i$ are the number of vertices of the graphs $\mathc G_1$ and $\mathc G_2$, respectively, while $\gamma=\frac13 s-\frac16+\frac{2pqr}s$ is independent of the edge $(i,i+1)$.
\end{theorem}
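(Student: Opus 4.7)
The plan is a direct substitution into definition~\eqref{eq:centr}, followed by algebraic simplification, using the decomposition from Theorem~\ref{th:cases} and the explicit Kemeny formulas of Theorems~\ref{th:9} and~\ref{th:add}. First I would apply Theorem~\ref{th:cases} to write
\[
c(i)=\kappa(E_{p,q,r})-\kappa(F_{p-i+1,1})-\kappa(E_{i-1,q,r,i}),\qquad 1<i\le p.
\]
The endpoint case $i=1$ requires no separate argument: the graph $F_{q+r+1,q+1}$ coincides with $E_{0,q,r,1}$ (a path on $q+r+1$ vertices with a loop at the middle vertex), and the formula of Theorem~\ref{th:9} remains valid at $p=0$, so the same identity can be used uniformly in $i$.

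Next I would evaluate the three Kemeny constants. The first is given directly by Theorem~\ref{th:9}. For the second, specializing Theorem~\ref{th:add} to $k=1$ causes the numerator to factor as $n(n-1)(2n-1)$, which cancels the denominator $6n-3$ and yields the clean expression $\kappa(F_{n,1})=n(n-1)/3$, i.e.\ $\alpha_i(\alpha_i-1)/3$. For the third, I apply Theorem~\ref{th:9} with $(p,q,r,p+1)\mapsto(i-1,q,r,i)$; setting $s':=i-1+q+r=\beta_i-1$, so that $2s'+1=2i-1+2q+2r$, gives
\[
\kappa(E_{i-1,q,r,i})=\tfrac13(\beta_i-1)\beta_i\;-\;2qr\,\frac{2i-1}{2i-1+2q+2r}.
\]

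Finally I would substitute and simplify, leveraging the identity $\alpha_i+\beta_i=s+1$. It gives
\[
\alpha_i(\alpha_i-1)+\beta_i(\beta_i-1)=(\alpha_i+\beta_i)^2-2\alpha_i\beta_i-(\alpha_i+\beta_i)=s(s+1)-2\alpha_i\beta_i,
\]
so when this sum is subtracted from $\kappa(E_{p,q,r})=s^2/3+1/6-2pqr/s$ all quadratic-in-$s$ contributions collapse, leaving $\tfrac23\alpha_i\beta_i+1/6-s/3-2pqr/s$ plus the $2qr(2i-1)/(2i-1+2q+2r)$ term. Collecting the edge-independent piece as $\gamma=s/3-1/6+2pqr/s$ yields the claimed formula. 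There is no genuine obstacle: the proof is pure bookkeeping, and the only subtleties are the uniform treatment of the $i=1$ endpoint and the cancellation of the $2n-1$ factor in Theorem~\ref{th:add} that occurs precisely when $k=1$.
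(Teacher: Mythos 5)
Your proposal is correct and follows essentially the same route as the paper: decompose $c(i)$ via Theorem~\ref{th:cases}, evaluate the three Kemeny constants with Theorems~\ref{th:9} and~\ref{th:add} (the latter giving $\kappa(F_{n,1})=n(n-1)/3$), and simplify using $\alpha_i+\beta_i=s+1$. Your explicit check that the $i=1$ case folds into the same formula via $F_{q+r+1,q+1}=E_{0,q,r,1}$ is a point the paper silently glosses over, and it is a welcome addition.
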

\begin{proof}
From the definition of centrality given in \eqref{eq:centr}, and from Theorem  \ref{th:cases}, for $1\le i\le p$ we have 
\[
c(i)=\kappa(E_{p,q,r})-\kappa(E_{i-1,q,r,i})-\kappa(F_{p-i+1,1}).
\]
Applying Theorem \ref{th:9} yields
\[
c(i)=\frac16(2s^2+1)-\frac{2pqr}{s}-
\frac13 \hat s(\hat s+1)+2qr\frac{2\hat p+1}{2\hat s+1}-\frac13 \tilde p(\tilde p-1),
\]
where $\tilde p=p-i+1$, 
$\hat p=i-1$, $\hat s=i-1+q+r$.
Substituting $q+r+i=\beta$ and $i=p+1-\alpha$ in the above expression yields the result. 
\end{proof}

Figure \ref{fig:plot} shows a plot of the function $c(i)$ for different values of $p,q,r$. After a possible maximum point close to the value of $i$ for which $\alpha_i=\beta_i$, i.e., $\mathc G_1$ and $\mathc G_2$ have the same cardinality, the function decreases and takes its minimum value at~$i=p$.

%%%%%%%%
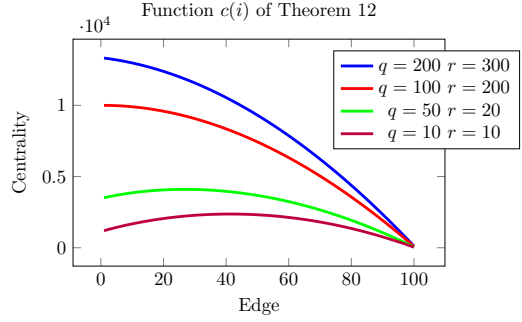
\begin{figure}\label{fig:3}
        \begin{center}               
        \resizebox{7cm}{!}{
           \begin{tikzpicture}
              \begin{axis}[
              legend style={at={(0.7,0.95)},anchor=north west},
                 %legend pos = north east, 
                 width = .7\linewidth, height = .3\textheight,
                 xlabel = {Edge}, 
                 ylabel = {Centrality}, title = {Function $c(i)$ of Theorem \ref{th:ci} }]
            \addplot[mark=none,color=blue,line width=1.5pt] table {fig3.dat}; % 0 e 1 col
            \addplot[mark=none,color=red,line width=1.5pt] table[x index = 0, y index = 2] {fig3.dat};
            \addplot[mark=none,color=green,line width=1.5pt] table[x index = 0, y index = 3] {fig3.dat};
            \addplot[mark=none,color=purple,line width=1.5pt] table[x index = 0, y index = 4] {fig3.dat};
\legend{$q=200 ~ r=300$, 
        $q=100 ~ r=200$,
        $q=50 ~ r=20$,
        $q=10 ~ r=10$,}
        \end{axis}
        \end{tikzpicture}
        }
        \end{center}\caption{Plot of the function $c(i)$ for $p=100$ and for different values of 
        $q$ and $r$ with $i$ in the range $[1, p]$. %Namely, $q=200$ and $r=300$ for Test 1, $q=100$ and $r=200$ for Test 2, $q=50$ and $r=20$ for Test 3,  $q=10$ and $r=10$ for Test 4.
        }\label{fig:plot}
        \end{figure}
%%%%%%%%

\subsection{A generalization}\label{sec:cross}

A similar analysis can be performed with a more general class of tree graphs, i.e. the one formed by trees with $k>3$ branches. For instance, we may consider a tree graph formed by 4 branches of lengths $p,q,r,s$, respectively, as the one depicted in Figure \ref{fig:gen2}.

It can be easily verified that the adjacency matrix has the form
\[
A=\begin{bmatrix}
0&\be_{1,p}^\top &\be_{1,q}^\top &\be_{1,r}^\top &\be_{1,s}^\top \\
\be_{1,p}&W_{p}&0&0&0\\
\be_{1,q}&0&W_{q}&0&0\\
\be_{1,r}&0&0&W_{r}&0\\
\be_{1,s}&0&0&0&W_s\end{bmatrix}
\]
and the distance matrix is given by
\[
\Delta=\begin{bmatrix}
0  &\bu_p^\top &\bu_q^\top &\bu_r^\top &\bu_s^\top \\
\bu_p&T_{p}&H_{pq}&H_{pr}&H_{ps}\\
\bu_q&H_{pq}^\top &T_{q}&H_{qr}&H_{qs}\\
\bu_r&H_{pr}^\top &H_{qr}^\top &T_{r}&H_{rs}\\
\bu_s&H_{ps}^\top &H_{qs}^\top&H_{rs}^\top &T_s
\end{bmatrix}.
\]

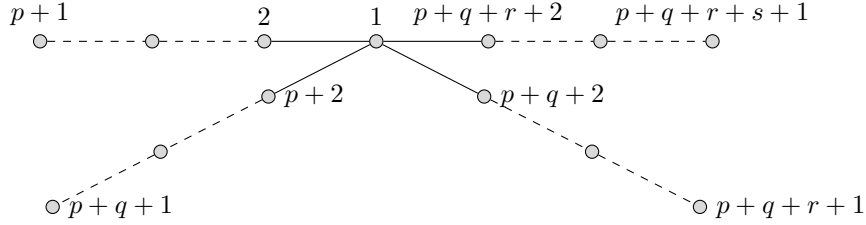
\begin{figure}
    \centering
\begin{tikzpicture}[
      mycircle/.style={
         circle,
         draw=black,
         fill=gray,
         fill opacity = 0.3,
         text opacity=1,
         inner sep=0pt,
         minimum size=5pt,
         font=\small},
      myarrow/.style={},  
     node distance=0.6cm and 1.3cm
      ]
      \node[mycircle] (c1) [label=above:$p+1$]{};
     % \node[mycircle,right=of c1] (c2) [label=above:$p$]{};
      \node[mycircle,right=of c1] (c3) {};
      \node[mycircle,right=of c3] (c4) [label=above:$2$]{};
      \node[mycircle,right=of c4] (c5) [label=above:$1$]{};
      \node[mycircle,right=of c5] (c9) [label=above:$p+q+r+2$]{};
%      \node[mycircle,right=of c9] (c10)[label=above:$p+q+3$]{};
      \node[mycircle,right=of c9] (c11) {};
      \node[mycircle,right=of c11] (c12)[label=above:$p+q+r+s+1$]{};
      \node[mycircle,below left=of c5] (c6) [label=right:$p+2$]{};
      \node[mycircle,below left=of c6] (c7) {};
      \node[mycircle,below left=of c7] (c8)  [label=right:$p+q+1$]{};
       \node[mycircle,below right=of c5] (r1) [label=right:$p+q+2$]{};
      \node[mycircle,below right=of r1] (r2) {};
      \node[mycircle,below right=of r2] (r3)  [label=right:$p+q+r+1$]{};

%   \draw[myarrow] (c1) -- (c2);
   \draw[myarrow] (c4) -- (c5);
   \draw[style=dashed] (c1) -- (c3);
   \draw[style=dashed] (c3) -- (c4);
  \draw[myarrow] (c5) -- (c9);
 %  \draw[myarrow] (c9) -- (c10);
   \draw[myarrow] (c5) -- (c6);
   \draw[style=dashed] (c6) -- (c7);
   \draw[style=dashed] (c7) -- (c8);
   \draw[style=dashed] (c9) -- (c11);
   \draw[style=dashed] (c11) -- (c12);
     \draw[myarrow] (c5) -- (r1);
     \draw[style=dashed] (r1) -- (r2);
     \draw[style=dashed] (r2) -- (r3);
    \end{tikzpicture}

    \caption{Tree graph formed by four branches.}
    \label{fig:gen2}
\end{figure}

The structure of the above matrices can be easily extended to the case of any number of $k$ branches.
Relying on these expressions we may arrive at explicit formulas for the centrality measure of these classes of graphs. We omit the details that are technical and do not add much more information.

\section{Some numerical experiments}\label{sec:exp}
In this section, we report on some numerical experiments performed in Matlab v.~9.5.0 (2018b) on a PC with Intel I3 processor and Ubuntu operating system Release 20.04.6 LTS. 

To verify the effectiveness of equation \eqref{eq:centr} for computing the centrality of a cut-edge, we have compared the result provided by \eqref{eq:centr} to the result obtained by applying the original definition given in \cite{abcmp}, that is equation \eqref{eq:cer}. 
Here, the computation of  $c_r(e)$ in \eqref{eq:cer} has been performed with the software \url{https://github.com/numpi/kemeny-based-centrality} designed in \cite{abcmp},
while the implementation of \eqref{eq:centr} has been performed relying on \eqref{eq:keminv}.  In the case of networks of very large size, for the computation of Kemeny's constants it is convenient to use the algorithm of \cite{bdkm} that relies on Schur complements.
We have considered a set of synthetic graphs, formed by one-path graphs, barbell graphs, binary trees, and star-shaped  graphs; and a real-world graph given by the road map of Pisa (Italy).

\subsection{Synthetic data}
Consider a binary tree graph with $2^m-1$ nodes and a one-path graph. For all the edges $e$ in the graph, we have computed the values of $c(e)$ in high precision by using the multi-precision toolbox Advanpix (\url{https://www.advanpix.com/}), together with the values of $c_r(e)$ and $c(e)$ computed in standard precision. We have set $r=1-s$ and chosen values of $s=10^{-i}$, $i=3,\ldots,8$.

  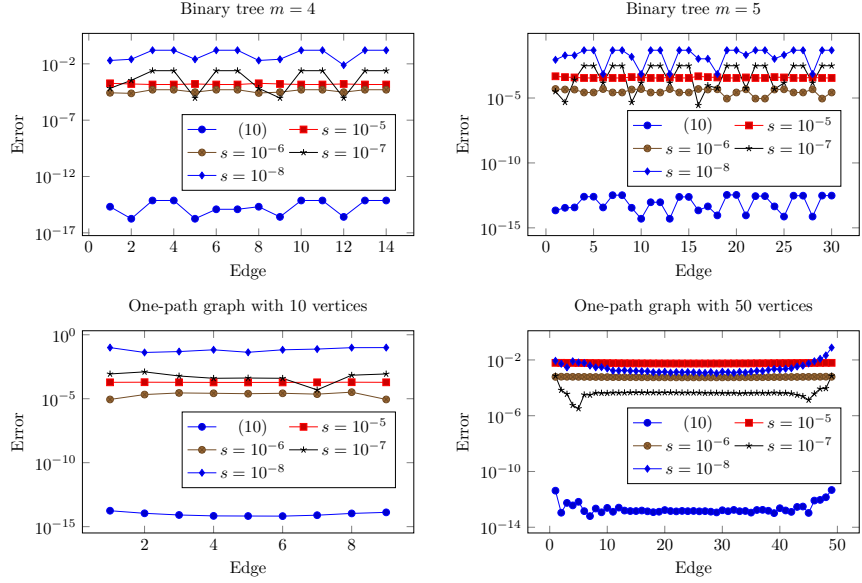
\begin{figure}
        \begin{center}
                \begin{tabular}{cc}
                \resizebox{5.5cm}{!}{
        \begin{tikzpicture}
        \begin{semilogyaxis}[
                legend style={at={(0.3,0.6)},anchor=north west},
                legend columns=2,
                width = .7\linewidth, height = .3\textheight,
                xlabel = {Edge}, 
                ylabel = {Error}, title = {Binary tree $m = 4$}]
            \addplot table {err.txt}; % 0 e 1 col
           % \addplot table[x index = 0, y index = 2] {err.txt};
           % \addplot table[x index = 0, y index = 3] {err.txt};
            \addplot table[x index = 0, y index = 4] {err.txt};
            \addplot table[x index = 0, y index = 5] {err.txt};
            \addplot table[x index = 0, y index = 6] {err.txt};
            \addplot table[x index = 0, y index = 7] {err.txt};
         %   \addplot table[x index = 0, y index = 8] {err.txt};
          %  \addplot table[x index = 0, y index = 9] {err.txt};
\legend{\eqref{eq:centr}, %(7),
$s=10^{-5}$,$s=10^{-6}$,$s=10^{-7}$,$s=10^{-8}$}
          %  \legend{(7), $s=10^{-1}$,$s=10^{-2}$,$s=10^{-3}$,$s=10^{-4}$,$s=10^{-5}$,$s=10^{-6}$,$s=10^{-7}$,$s=10^{-8}$}
        \end{semilogyaxis}
        \end{tikzpicture}}
&       
                \resizebox{5.5cm}{!}{
        \begin{tikzpicture}
        \begin{semilogyaxis}[
                legend style={at={(0.3,0.62)},anchor=north west},
                legend columns=2,
                width = .7\linewidth, height = .3\textheight,
                xlabel = {Edge}, 
                ylabel = {Error}, title = {Binary tree $m=5$}]
            \addplot table {err1.txt}; % 0 e 1 col
           % \addplot table[x index = 0, y index = 2] {err1.txt};
           % \addplot table[x index = 0, y index = 3] {err1.txt};
            \addplot table[x index = 0, y index = 4] {err1.txt};
            \addplot table[x index = 0, y index = 5] {err1.txt};
            \addplot table[x index = 0, y index = 6] {err1.txt};
            \addplot table[x index = 0, y index = 7] {err1.txt};
         %   \addplot table[x index = 0, y index = 8] {err1.txt};
          %  \addplot table[x index = 0, y index = 9] {err1.txt};
\legend{\eqref{eq:centr}, %(7),
$s=10^{-5}$,$s=10^{-6}$,$s=10^{-7}$,$s=10^{-8}$}
          %  \legend{(7), $s=10^{-1}$,$s=10^{-2}$,$s=10^{-3}$,$s=10^{-4}$,$s=10^{-5}$,$s=10^{-6}$,$s=10^{-7}$,$s=10^{-8}$}
        \end{semilogyaxis}
        \end{tikzpicture}}\\
                        \resizebox{5.5cm}{!}{
        \begin{tikzpicture}
        \begin{semilogyaxis}[
                legend style={at={(0.3,0.6)},anchor=north west},
                legend columns=2,
                %style={at={(-1,-1)} ,anchor=south west},
                width = .7\linewidth, height = .3\textheight,
                xlabel = {Edge}, 
                ylabel = {Error}, title = {One-path graph with 10 vertices}]
            \addplot table {err2.txt}; % 0 e 1 col
           % \addplot table[x index = 0, y index = 2] {err2.txt};
           % \addplot table[x index = 0, y index = 3] {err2.txt};
            \addplot table[x index = 0, y index = 4] {err2.txt};
            \addplot table[x index = 0, y index = 5] {err2.txt};
            \addplot table[x index = 0, y index = 6] {err2.txt};
            \addplot table[x index = 0, y index = 7] {err2.txt};
         %   \addplot table[x index = 0, y index = 8] {err2.txt};
          %  \addplot table[x index = 0, y index = 9] {err2.txt};
\legend{\eqref{eq:centr}, %(7),
$s=10^{-5}$,$s=10^{-6}$,$s=10^{-7}$,$s=10^{-8}$}
          %  \legend{(7), $s=10^{-1}$,$s=10^{-2}$,$s=10^{-3}$,$s=10^{-4}$,$s=10^{-5}$,$s=10^{-6}$,$s=10^{-7}$,$s=10^{-8}$}
        \end{semilogyaxis}
        \end{tikzpicture}}
        &
                        \resizebox{5.5cm}{!}{
        \begin{tikzpicture}
        \begin{semilogyaxis}[
                legend style={at={(0.3,0.62)},anchor=north west},
                legend columns=2,
                width = .7\linewidth, height = .3\textheight,
                xlabel = {Edge}, 
                ylabel = {Error}, title = {One-path graph with  50 vertices}]
            \addplot table {err3.txt}; % 0 e 1 col
           % \addplot table[x index = 0, y index = 2] {err3.txt};
           % \addplot table[x index = 0, y index = 3] {err3.txt};
            \addplot table[x index = 0, y index = 4] {err3.txt};
            \addplot table[x index = 0, y index = 5] {err3.txt};
            \addplot table[x index = 0, y index = 6] {err3.txt};
            \addplot table[x index = 0, y index = 7] {err3.txt};
         %   \addplot table[x index = 0, y index = 8] {err3.txt};
          %  \addplot table[x index = 0, y index = 9] {err3.txt};
\legend{\eqref{eq:centr}, %(7),
$s=10^{-5}$,$s=10^{-6}$,$s=10^{-7}$,$s=10^{-8}$}
          %  \legend{(7), $s=10^{-1}$,$s=10^{-2}$,$s=10^{-3}$,$s=10^{-4}$,$s=10^{-5}$,$s=10^{-6}$,$s=10^{-7}$,$s=10^{-8}$}
        \end{semilogyaxis}
        \end{tikzpicture}}

\end{tabular}
\end{center}\caption{Relative errors in computing $c(e)$ by means of equation \eqref{eq:centr} and as $\lim_{r\to 1} c_r(e)$ by means of \eqref{eq:cer} for several values of $r=1-s$. In the first line, we have the case of a binary tree of depth 4 (left) and of depth 5 (right).
    In the second line, we have the case of a one-path graph formed by 10 vertices (left) and by 50 vertices (right). Small values of $s$ produce large cancellation errors, and large values produce a poor approximation of the limit.}\label{fig:err}
\end{figure}

The relative errors with respect to the high precision values, assumed as reference values, are plotted in Figure \ref{fig:err} with $m=4$ and $m=5$ for the binary tree (upper plots), and for one-path graphs formed by 10 and by 50 vertices, respectively (lower plots). It is interesting to point out that small values of $s$ would provide values of $c_r(e)$ closer to the sought limit value  $\lim_{r\to 1}c_r(e)$. On the other hand, the smaller $s$, the higher the numerical cancellation errors in the floating-point  computation of \eqref{eq:cer}. Observe that the value of $c_r(e)$ obtained in \eqref{eq:cer} is the result of a subtraction of two terms that tend to infinity as $r$ tends to 1. This explains why the error is higher the closer the value of $r$ to 1.  This  source of cancellation appears clearly from the graphs in Figure~\ref{fig:err} where the most accurate floating-point approximation, with a relative error of about $10^{-5}$, is obtained for $s=10^{-6}$, or for $s=10^{-7}$, whereas the error provided by \eqref{eq:centr} is much closer to the machine precision. This confirms the better numerical stability of this computation which overcomes numerical cancellation. 

By looking at Figure \ref{fig:err}, we may observe that the rounding errors, in the case of binary trees, have an oscillatory behaviour with respect to the edge index of the binary tree. This feature likely depends on the numeration of the edges and on the fact that edges at the same level in the tree are affected by the same rounding errors.

A second set of experiments aims to confirm that the centrality of a cut-edge connecting two subgraphs formed by $m_1$ and $m_2$ vertices is higher the closer to 1 is the ratio $m_1/m_2$.

\begin{figure}
    \centering
    \includegraphics[width=0.45\linewidth]{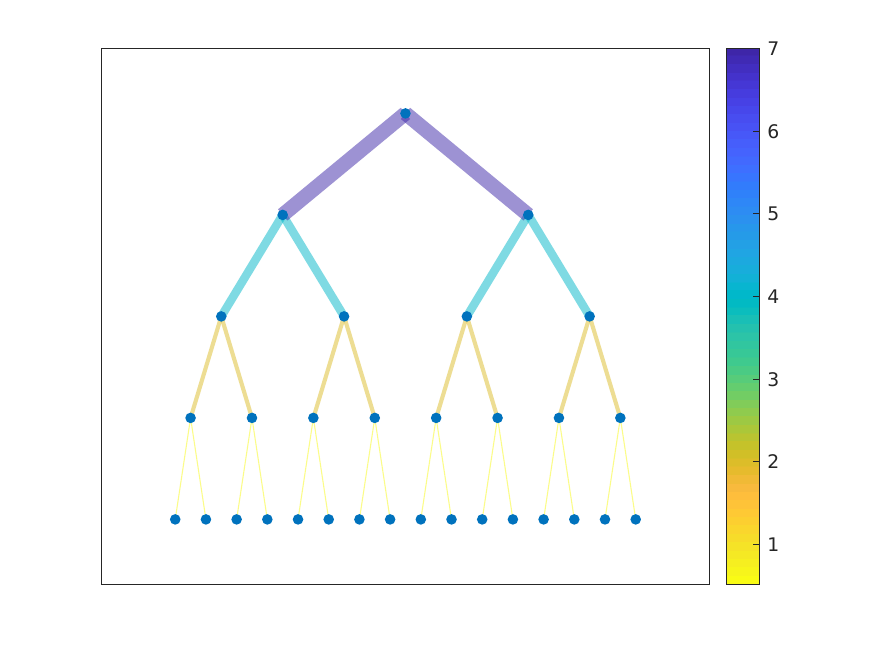} \includegraphics[width=0.45\linewidth]{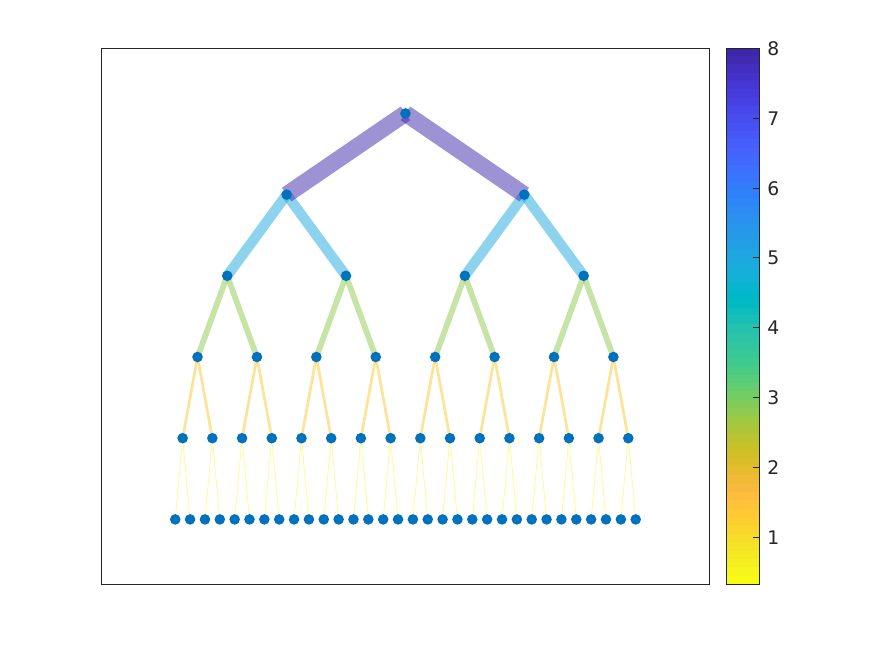}
    \caption{Binary tree graph with $2^m-1$ nodes, where edges are displayed with different thicknesses and colours according to their centrality. On the left and the right, the graphs obtained with $m=5$ and $m=6$, respectively.   Observe that the centrality is constant for each level.}
    \label{fig:bintree}
\end{figure}

\begin{figure}
    \centering
    \includegraphics[width=0.45\linewidth]{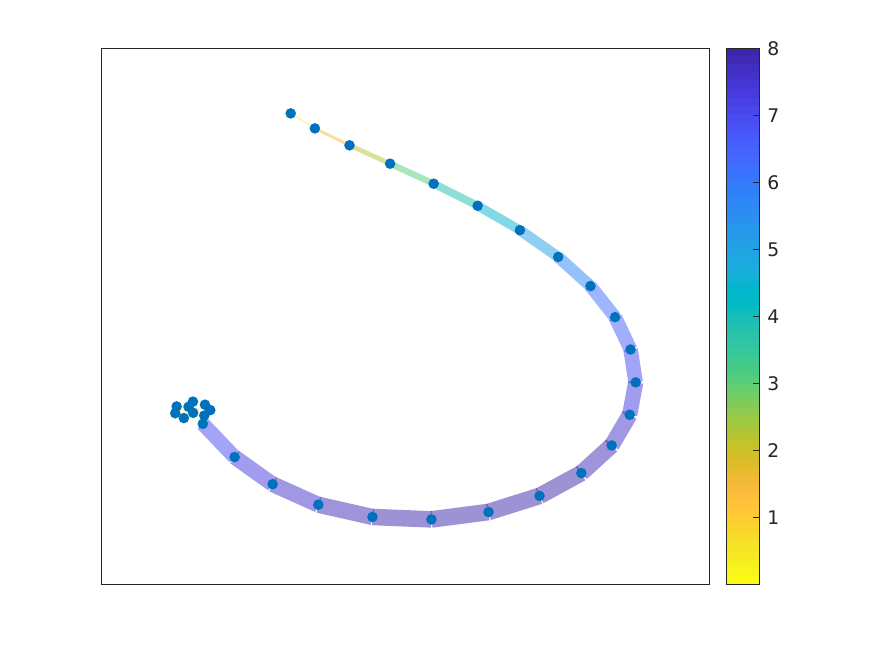}
     \includegraphics[width=0.45\linewidth]{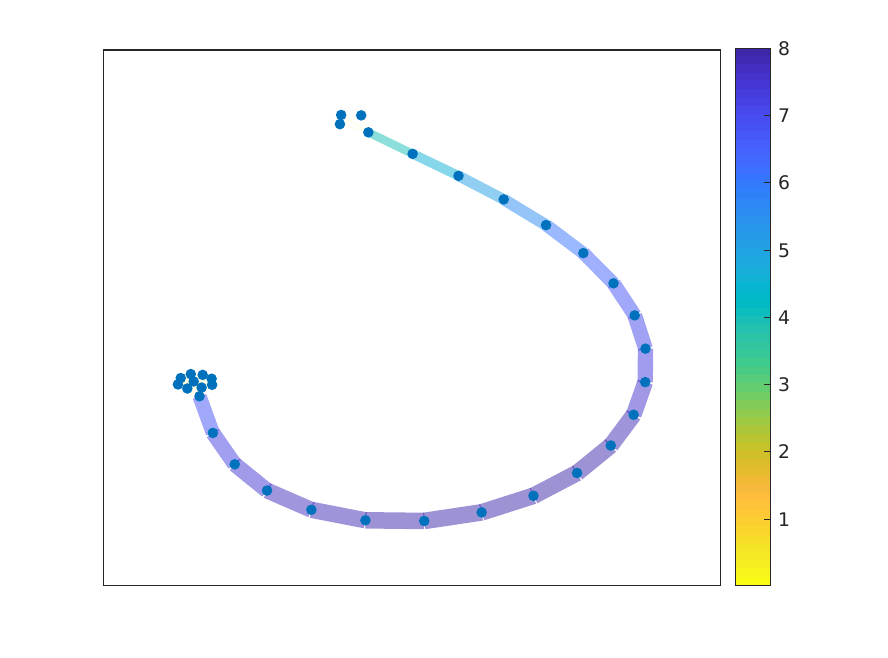}
    \caption{Barbell graphs: Two complete graphs with $m$ and $n$ nodes are connected by a path formed by $p$ edges. On the left, the case $p=20$, $m=8$, $n=2$; on the right the case $p=20$, $m=8$, $n=4$.}
    \label{fig:barbell}
\end{figure}

\begin{figure}
    \centering
    \includegraphics[width=0.45\linewidth]{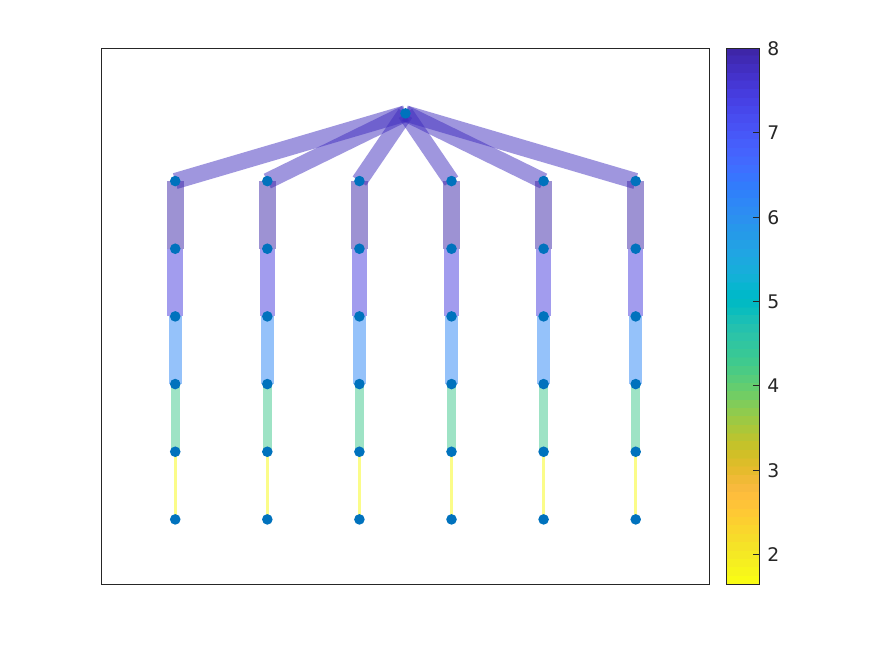}     \includegraphics[width=0.45\linewidth]{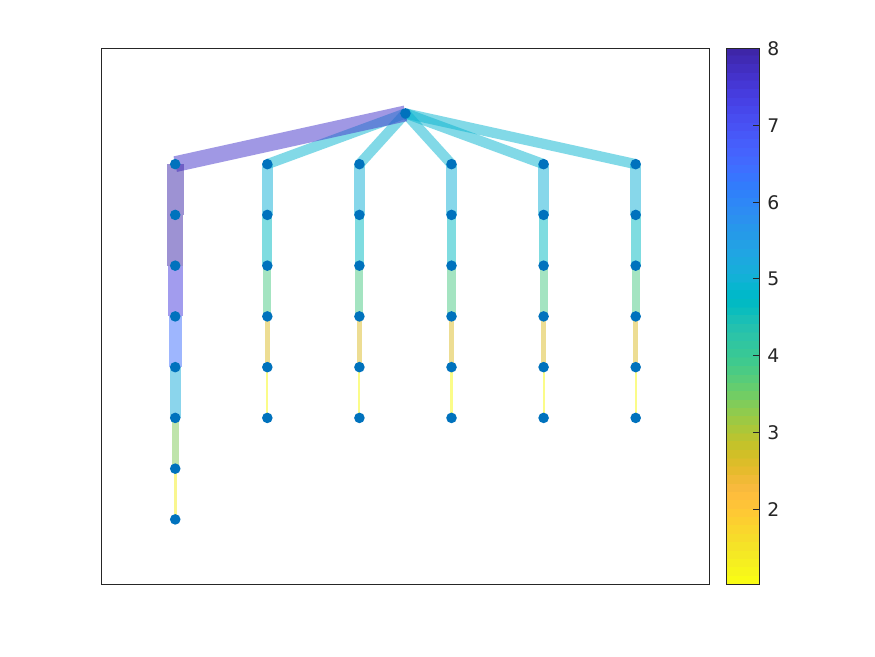}
    \caption{Star-shaped graph: The variation of the length of a path causes a substantial change in the centrality score of cut edges.}
    \label{fig:star}
\end{figure}

To this end, besides binary trees, we have considered barbell graphs formed by two complete graphs connected by a single path (Figure \ref{fig:barbell}),  star-shaped graphs (Figure~\ref{fig:star}), and one-path graphs where the vertices are replaced by cliques (Figure~\ref{fig:barkron}).

We have verified that in all these cases the centrality of a cut-edge $e$ is higher when the ratio of the sizes of the subgraphs connected by $e$ is closer to 1.

%The graphs in Figures \ref{fig:bintree} and \ref{fig:barbell} are plotted so that 
In all the figures, all the graphs are plotted so that 
each edge's thickness and colour is proportional to the centrality of the edge. We may observe that for the binary tree in Figure \ref{fig:bintree}, the centrality decreases with the level of the edge. The first two edges connected to the root have the highest value of centrality. These edges connect two subgraphs of similar cardinality. 
The minimum centrality is taken by the edges in the last level that connect a single vertex to a subgraph of large cardinality.

\begin{figure}
    \centering
    \includegraphics[width=0.45\linewidth]{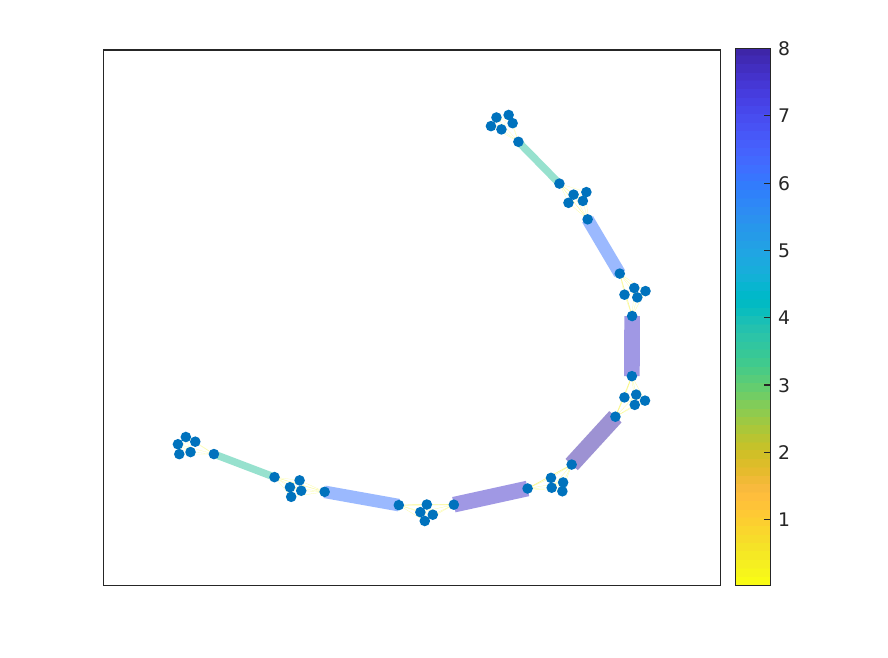}     \includegraphics[width=0.45\linewidth]{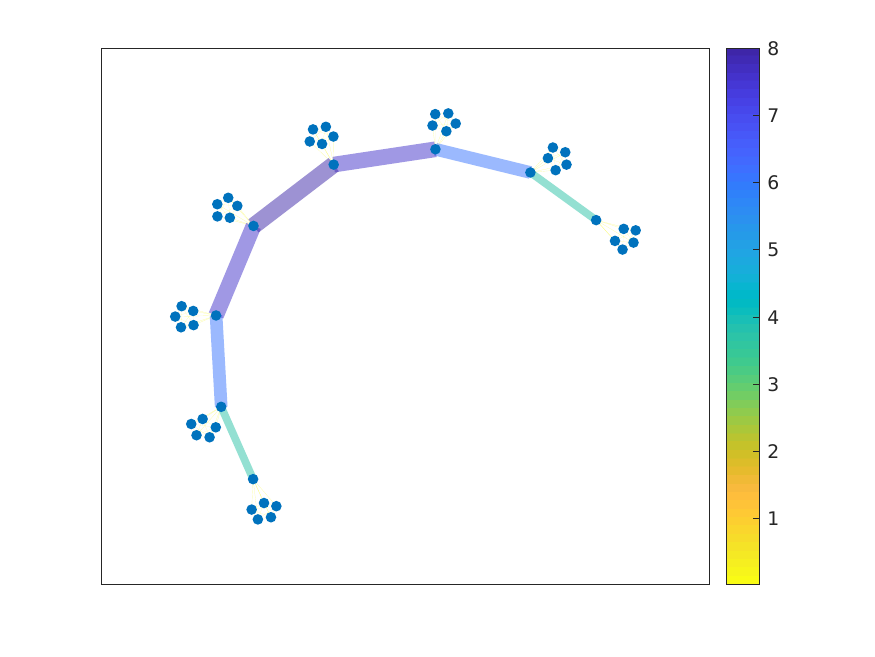}
    \caption{One-path graphs where cliques replace the vertices with two different kinds of connections. The central edges maintain a larger centrality score.}
    \label{fig:barkron}
\end{figure}

The numerical values of the centrality scores of each level are reported in Table~\ref{tab:1} where $m$ denotes the depth of the binary tree.

\begin{table} %[]
    \centering
    \begin{tabular}{c|cccccc}
     $m~\backslash$ Level    & 1&2&3&4&5&6 \\ \hline
      2   
& 0.8333%333
      \\ 3
&4.7571 %42857
&1.5
\\ 4 
&16.0919 %41391941
&8.1236 %02484472
&2.2936 %5079365079
\\ 5
&42.9954 %3937708565
&24.8283%1423895253682
&12.0818%18181818181&
&3.1677%966101694915
\\      6
&101.5850%9816759420
&62.4365%2596452198349      
&34.9999%66467708403
&16.4139%604228788&
&4.0937%581956464725937
\\7
&223.9567%82902137232845
&142.3938%043331657613693
&84.8946%851045969648103
&46.1332%2451606970435422 
&20.9977%443609022556390
&5.0515%08252703471826
    \end{tabular}
    \caption{Numerical values of the centrality scores of the edges of binary graphs of $2^m-1$ vertices.}
    \label{tab:1}
\end{table}

A similar situation holds for the barbell graphs in Figure \ref{fig:barbell} where edges closer to one of the two end points have a lower centrality score and edges that split the graph into two subgraphs of almost the same size have the highest score. The size of the two cliques influences the score of the cut-edges.

An analogous situation is displayed in Figure \ref{fig:star} where a star-shaped graph is displayed. In the graph on the left, all the paths have the same length and the centrality scores of each branch are the same and decrease if the edge gets closer to the endpoint. In the graph on the right, the length of the first branch has been increased. This leads to the reduction of the score of the edges in the other branches.

Figure \ref{fig:barkron} reports two one-path graphs where each vertex has been replaced by a clique and different kinds of connections are applied. Once again, in both cases, the central edges have a higher centrality score.

\subsection{The case of a road map}
In this section, we performed some numerical experiments on Pisa's road map. In this map, shown in Figure \ref{fig:pisa}, there are 1404 vertices and 3588 edges, among which there are 221 cut-edges, and they are represented with a thicker blue line in the figure. 

\begin{figure}
    \centering
    \includegraphics[width=0.34\linewidth]{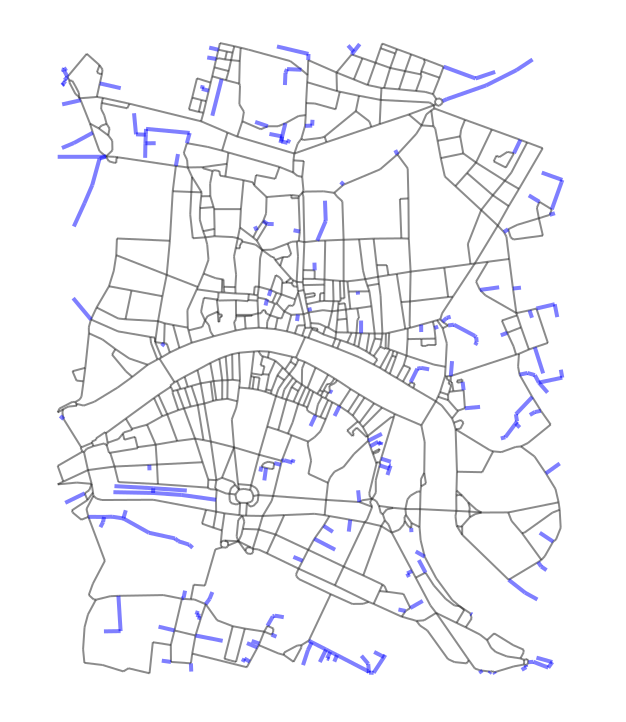} ~~
    \raisebox{0.33cm}{\includegraphics[width=0.294\linewidth]{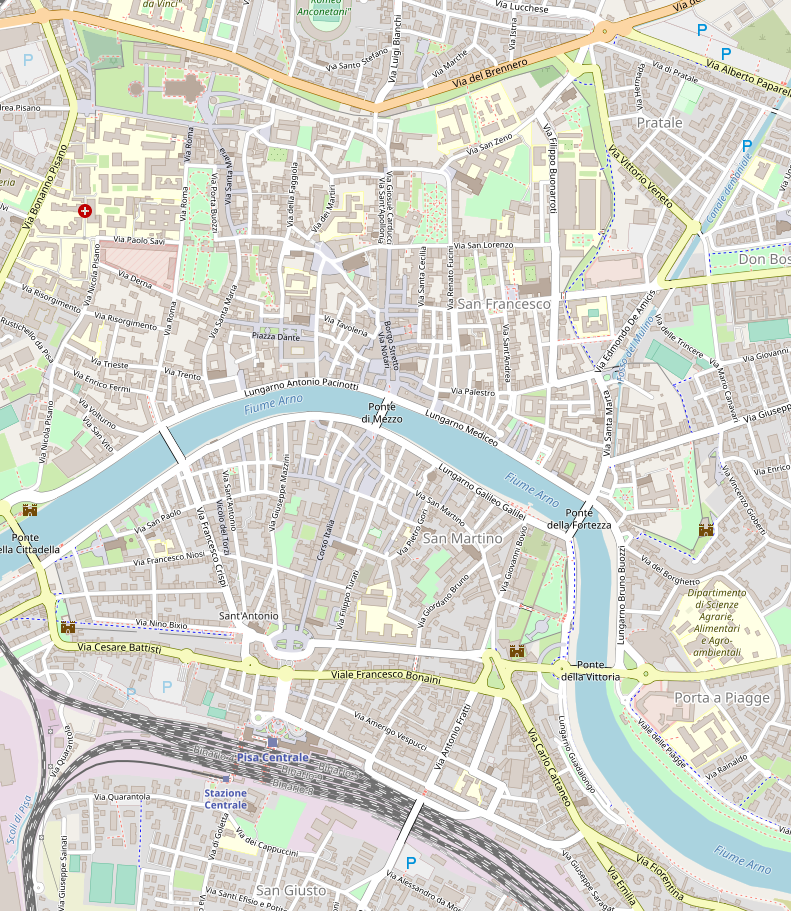}}
    \caption{{Road map of Pisa. This graph has 221 cut-edges that are represented with a thicker blue line.}}
    \label{fig:pisa}
\end{figure}

We have modified the software developed in \cite{abcmp} in order to provide in output the indices of all the cut-edges according to the following criterion: $e$ is a cut-edge if $c_r(e)>\frac1{2(1-r)}$. The centralities of the cut-edges are computed by means of the function {\tt kementrality} of the package
\url{https://github.com/numpi/kementrality/archive/refs/heads/main.zip}
%\cite{abcmp} 
with several values of the regularization parameter $r=1-s$, i.e., for $s=10^{-j}$, $j=5,6,7,8$. The same values are computed with equation \eqref{eq:centr} in double precision and extended precision by means of the toolbox {\tt Advanpix}. These latter values are the reference values for computing the relative errors. Figure \ref{fig:errpisa} reports these  errors for the different values of $s$ and for the new formula.

\begin{figure}
       \begin{center}
       \begin{tabular}{cc}
        \resizebox{5.5cm}{!}{
        \begin{tikzpicture}
        \begin{semilogyaxis}[
                legend style={at={(0.3,0.6)},anchor=north west},
                legend columns=2,
                width = .7\linewidth, height = .3\textheight,
                xlabel = {Cut-edge}, 
                ylabel = {Error}, title = {Pisa road map, cut-edges 1--60}]
            \addplot table {err_pisa1.txt};
      %      \addplot table[x index = 0, y index = 2] {err_pisa1.txt};
            \addplot table[x index = 0, y index = 3] {err_pisa1.txt};
            \addplot table[x index = 0, y index = 4] {err_pisa1.txt};
            \addplot table[x index = 0, y index = 5] {err_pisa1.txt};
            \legend{\eqref{eq:centr},
   %            $s=10^{-4}$,
               $s=10^{-5}$,
               $s=10^{-6}$,
               $s=10^{-7}$}
        \end{semilogyaxis}
        \end{tikzpicture}
        }
        &
                \resizebox{5.5cm}{!}{
        \begin{tikzpicture}
        \begin{semilogyaxis}[
                legend style={at={(0.3,0.6)},anchor=north west},
                legend columns=2,
                width = .7\linewidth, height = .3\textheight,
                xlabel = {Cut-edge}, 
                ylabel = {Error}, title = {Pisa road map, cut-edges 61--120}]
            \addplot table {err_pisa2.txt};
      %      \addplot table[x index = 0, y index = 2] {err_pisa2.txt};
            \addplot table[x index = 0, y index = 3] {err_pisa2.txt};
            \addplot table[x index = 0, y index = 4] {err_pisa2.txt};
            \addplot table[x index = 0, y index = 5] {err_pisa2.txt};
            \legend{\eqref{eq:centr},
   %            $s=10^{-4}$,
               $s=10^{-5}$,
               $s=10^{-6}$,
               $s=10^{-7}$}
        \end{semilogyaxis}
        \end{tikzpicture}
        }\\
                \resizebox{5.5cm}{!}{
        \begin{tikzpicture}
        \begin{semilogyaxis}[
                legend style={at={(0.3,0.6)},anchor=north west},
                legend columns=2,
                width = .7\linewidth, height = .3\textheight,
                xlabel = {Cut-edge}, 
                ylabel = {Error}, title = {Pisa road map, cut-edges 121--180}]
            \addplot table {err_pisa3.txt};
      %      \addplot table[x index = 0, y index = 2] {err_pisa3.txt};
            \addplot table[x index = 0, y index = 3] {err_pisa3.txt};
            \addplot table[x index = 0, y index = 4] {err_pisa3.txt};
            \addplot table[x index = 0, y index = 5] {err_pisa3.txt};
            \legend{\eqref{eq:centr},
   %            $s=10^{-4}$,
               $s=10^{-5}$,
               $s=10^{-6}$,
               $s=10^{-7}$}
        \end{semilogyaxis}
        \end{tikzpicture}
        }&
                \resizebox{5.5cm}{!}{
        \begin{tikzpicture}
        \begin{semilogyaxis}[
                legend style={at={(0.3,0.6)},anchor=north west},
                legend columns=2,
                width = .7\linewidth, height = .3\textheight,
                xlabel = {Cut-edge}, 
                ylabel = {Error}, title = {Pisa road map, cut-edges 181--221}]
            \addplot table {err_pisa4.txt};
      %      \addplot table[x index = 0, y index = 2] {err_pisa4.txt};
            \addplot table[x index = 0, y index = 3] {err_pisa4.txt};
            \addplot table[x index = 0, y index = 4] {err_pisa4.txt};
            \addplot table[x index = 0, y index = 5] {err_pisa4.txt};
            \legend{\eqref{eq:centr},
   %            $s=10^{-4}$,
               $s=10^{-5}$,
               $s=10^{-6}$,
               $s=10^{-7}$}
        \end{semilogyaxis}
        \end{tikzpicture}
        }
        \end{tabular}
\end{center}
\caption{Relative errors in computing $c(e)$, where $e$ are the cut-edges in the road map of Pisa. The computation is performed  by means of equation \eqref{eq:centr} and as $\lim_{r\to 1} c_r(e)$ by means of \eqref{eq:cer} for several values of $r=1-s$. In the x-axis the indices of the cut edges, in the y-axis the relative errors.}\label{fig:errpisa}
\end{figure}

We may observe that, while for the formula \eqref{eq:centr} the errors are roughly in between $10^{-13}$ and $10^{-10}$, for the computation by means of \eqref{eq:cer} with $s=6$, the errors are in the range $[10^{-4},10^{-3}]$. Worse bounds are obtained for different values of $s$. In Table~\ref{tab:range} we report these bounds (leftmost columns) for different values of $s$ together with the corresponding bounds of the one-path graph with 11 vertices (rightmost columns). In the last line, we report the values of the errors obtained by performing the computation by means of \eqref{eq:centr}. We may notice that the relative errors in the case of the road map are sensibly larger than the corresponding errors for the one-path graph.
\begin{table}
    \begin{center}
        \begin{tabular}{c|cc|cc}
        $s$& Minimum error& Maximum error& Minimum error& Maximum error\\ \hline
        $10^{-4}$&
        $1.3\cdot 10^{-2}$ & $8.3\cdot 10^{-2}$
        & $1.9\cdot 10^{-3}$ & $2.0\cdot 10^{-3}$\\
        
        $10^{-5}$&
        $1.4\cdot 10^{-3}$ & $9.5\cdot 10^{-3}$
        & $1.9\cdot 10^{-4}$ & $2.0\cdot 10^{-4}$\\
        
        $10^{-6}$&
        $1.2\cdot 10^{-4}$ & $9.6\cdot 10^{-4}$
        &$8.8\cdot 10^{-6}$&$3.3\cdot 10^{-5}$\\
        
        $10^{-7}$&
        $1.2\cdot 10^{-6}$ & $5.6\cdot 10^{-3}$
        &$4.9\cdot 10^{-5}$& $1.3\cdot 10^{-3}$\\
        
        $10^{-8}$&
        $5.3\cdot 10^{-4}$ & $3.4\cdot 10^{-1}$
        & $4.1\cdot 10^{-2}$& $9.9\cdot 10^{-2}$\\
        $10^{-9}$&$3.4\cdot 10^{-1}$ & $8.4\cdot 10^{1}$
        & $7.0\cdot10^{0}$ &$1.2\cdot 10^1$ \\
        \eqref{eq:centr}&$1.1\cdot 10^{-13}$ &$1.3\cdot 10^{-10}$
        &$6.7\cdot 10^{-15}$ &$1.8\cdot 10^{-14}$
        \end{tabular}
    \end{center}\caption{Range of the relative errors in computing the centrality of cut-edges by means of \eqref{eq:cer} for different values of $s$. The two columns on the left report the values of the road map of Pisa, and the two columns on the right report the values of the one-path graph with 11 vertices. In the last line, the relative errors obtained by applying \eqref{eq:centr}.}\label{tab:range}
\end{table}

\section{Conclusions}\label{sec:conc}
We have revisited the centrality measure $c(e)$ introduced in \cite{abcmp}, for the cut-edges $e$ of a given graph $\mathc G$. An explicit expression of $c(e)$ is given in terms of  Kemeny's constant of $\mathc G$ and two suitable subgraphs. This expression avoids the introduction of a regularization parameter and overcomes the problem of numerical cancellation intrinsic in the approach of \cite{abcmp}. A  physical interpretation of this measure is given in terms of the stochastic complement of the probability matrix associated with the random walk on the graph $\mathc G$ with respect to suitable subgraphs.

This measure has been explicitly expressed for one-path graphs, i.e., associated with a tridiagonal adjacency matrix, and for more general tree graphs formed by 3 branches. A  generalization to the case of $n$ branches has been outlined. These expressions confirm the nice behaviour of this measure observed experimentally in the paper \cite{abcmp}, where the centrality of a cut-edge that connects two subgraphs formed by $m_1$ and $m_2$ vertices is higher the closer to 1 is the ratio $m_1/m_2$. As a side result, a generalization of a result of \cite{kirk:ela} to the case of graphs with loops is given. 
Numerical experiments are shown to verify that the new expression is numerically stable and to confirm the good physical behaviour of this measure.

%\section*{Acknowledgments}
%The authors thank the anonymous referees for the careful reading and remarks that helped improve the presentation.

\section*{Funding}
This work is partially supported by
\begin{itemize}
    \item MUR Excellence Department Project awarded to the Department of Mathematics, University of Pisa, CUP I57G22000700001. 
    \item European Union - NextGenerationEU under the National Recovery and Resilience Plan (PNRR) - Mission 4 Education and research - Component 2 From research to business - Investment 1.1 Notice Prin 2022 - DD N. 104  2/2/2022, titled Low-rank Structures and Numerical Methods in Matrix and Tensor Computations and their Application, proposal code 20227PCCKZ – CUP 
I53D23002280006. 
\item GNCS of INdAM (Istituto Nazionale di Alta Matematica).
\item  Natural Sciences and Engineering
Research Council of Canada, grant number RGPIN–2019–05408.
\end{itemize}

The fourth author is a member of GNCS of INdAM.

%\bibliographystyle{abbrv}

%\bibliography{biblio.bib}

\end{document}